\newtheorem{theorem}{Theorem}[section]
\newtheorem{lemma}[theorem]{Lemma}
\newtheorem{proposition}[theorem]{Proposition}
\newtheorem{corollary}[theorem]{Corollary}
\newtheorem{definition}[theorem]{Definition}
\newtheorem{example}[theorem]{Example}
\def\be{\begin{equation}}
\def\ee{\end{equation}}
\def\br{\begin{eqnarray}}
\def\er{\end{eqnarray}}
 \title{Complex balanced spaces }
 \author{ Jixiang Fu}
 \author{Lingxu Meng}
 \author{Wei Xia }
 \address{Institute of Mathematics\\  Fudan University \\ Shanghai
200433, China}
\email{majxfu@fudan.edu.cn}
\email{11110180019@fudan.edu.cn}
\email{11110180005@fudan.edu.cn}
\thanks{Fu is supported in part by NFSC}
 \date{}
\begin{document}
\maketitle

\begin{abstract}
In this paper, the concept of balanced manifolds are generalized to reduced complex spaces:  the class $\mathscr{B}$ and balanced spaces. Compared with the case of K\"ahlerian, the class $\mathscr{B}$ is similar to the Fujiki class $\mathscr{C}$ and the balanced space is similar to the K\"ahler space.  Some properties about these complex spaces are obtained, and the relations between the balanced spaces and  the class $\mathscr{B}$ are studied.
\end{abstract}

\textbf{keywords}:  class $\mathscr{B}$,  balanced space, balanced metric, Fujiki class $\mathscr{C}$

\textbf{AMSC}: 32C15, 32C10, 53C55,

\section{Introduction}

\quad In complex geometry, complex manifolds which admit K\"ahler metrics are studied by many mathematicians. Its topology and geometry have been understood very deep. In 1978, A. Fujiki generalized first the concept ``K\"ahler" to general complex spaces: the K\"ahler space which is a complex space admitting a strictly positive closed $(1, 1)$-form and the class $\mathscr{C}$ consisting of the reduced compact complex spaces  which is the meoromorphic image of a compact K\"ahler space (we call it the Fujiki class). In \cite{16}, Fujiki proved that if a compact reduced complex space $X$ is K\"ahler, then every irreducible component of the Douady space $D_{X, red}$ is compact. On the other hand, he also proved that if $X$ is in the  Fujiki class $\mathscr{C}$, then every irreducible component of the Barlet space $B(X)$ is compact. In \cite{24} and \cite{25}, J. Varouchas proved that any reduced complex space in the Fujiki class $\mathscr{C}$ has a proper modification which is a compact K\"ahler manifold.

In non-K\"ahler geometry, the complex manifold with a balanced metric is very important. A \emph{balanced} metric on a complex $n$-dimensional manifold is a hermitian metric such that its associated hermitian form $\omega$ satisfies $d(\omega^{n-1})= 0$. In \cite{22}, Michelsohn observed that the existence of a balanced metric is equivalent to the existence of a closed strictly positive $(n-1, n-1)$-form $\Omega$, and hence characterized the existence of balanced metrics as follows:
 \begin{theorem}
 [\cite{22}, Theorem 4.7] Let $M$ be a compact complex manifold. Then $M$ is balanced if and only if there is no nonzero positive current of bidegree $(1, 1)$ on $M$ which is the $(1, 1)$-component of a boundary.
 \end{theorem}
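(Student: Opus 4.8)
The plan is to run a Hahn--Banach separation argument inside the duality pairing between currents of bidegree $(1,1)$ and smooth forms of bidegree $(n-1,n-1)$, in the spirit of the Harvey--Lawson characterization of K\"ahler manifolds. The starting point is the observation recalled just above: $M$ is balanced if and only if it carries a $d$-closed strictly positive $(n-1,n-1)$-form $\Omega$, since $\omega\mapsto\omega^{n-1}$ is a bijection from positive $(1,1)$-forms onto strictly positive $(n-1,n-1)$-forms, so producing a closed strictly positive $\Omega$ is the same as producing a balanced hermitian form. Hence it suffices to prove: there is no nonzero positive $(1,1)$-current which is the $(1,1)$-component of a boundary if and only if such an $\Omega$ exists.

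One implication is a direct integration by parts. Suppose $\Omega$ is closed and strictly positive of bidegree $(n-1,n-1)$, and let $T=(dR)^{1,1}$ be a positive $(1,1)$-current that is the $(1,1)$-component of a boundary, $R$ a degree-one current. Since $\Omega$ has bidegree $(n-1,n-1)$, only the $(1,1)$-part of $dR$ contributes to the pairing, so
\[
\langle T,\Omega\rangle=\langle dR,\Omega\rangle=\pm\langle R,d\Omega\rangle=0.
\]
But $T\ge 0$ and $\Omega$ strictly positive force $\langle T,\Omega\rangle\ge 0$, with equality only if $T=0$. Thus $T=0$, and no nonzero such current can exist.

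For the converse I would argue by contraposition. Let $C$ be the closed convex cone of positive $(1,1)$-currents and $B$ the linear subspace of $(1,1)$-components of boundaries; the hypothesis is exactly $C\cap B=\{0\}$. Fixing an auxiliary hermitian form $\omega_0$, set $K=\{T\in C:\langle T,\omega_0^{\,n-1}\rangle=1\}$, which is convex and, by the mass bound built into positivity, compact in the weak-$\ast$ topology on currents viewed as the dual of smooth forms; by hypothesis $K\cap B=\emptyset$. Separating the compact convex set $K$ from the subspace $B$ yields a continuous functional, necessarily represented by a smooth $(n-1,n-1)$-form $\phi$, with $\langle T,\phi\rangle=0$ for all $T\in B$ and $\langle T,\phi\rangle>0$ for all $T\in K$. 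The first condition reads $\langle dR,\phi\rangle=\pm\langle R,d\phi\rangle=0$ for every degree-one current $R$, whence $d\phi=0$; the second, tested against the positive $(1,1)$-currents concentrated at points, says precisely that $\phi$ is strictly positive. Setting $\Omega=\phi$ produces the desired closed strictly positive $(n-1,n-1)$-form, so $M$ is balanced.

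The main obstacle is the functional-analytic bookkeeping in this last step: one must fix the topology so that $K$ is genuinely weak-$\ast$ compact (positive currents of bounded $\omega_0$-mass), so that the separating hyperplane is represented by a smooth form rather than a mere current, and above all one must ensure the separation is legitimate, i.e.\ that $K$ can be split off from $B$ by a closed hyperplane. The cleanest route is to use that $d$ has closed range on the compact manifold $M$ and to verify directly that $\overline{B}\cap K=\emptyset$, so that $K$ is separated from the closed subspace $\overline{B}$ and the resulting functional still annihilates $B$. The remaining points---that only the $(1,1)$-part of $dR$ enters the pairing, and that weak, strong, and ordinary positivity all coincide in the extreme bidegrees $(1,1)$ and $(n-1,n-1)$, so that the dual-cone characterization of strict positivity applies---are routine.
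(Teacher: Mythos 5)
The paper does not prove this statement: it is quoted verbatim as Theorem 4.7 of Michelsohn \cite{22}, so the only comparison available is with her original argument. Your proposal essentially reconstructs that argument: the forward direction by pairing a positive $(1,1)$-current with a closed strictly positive $(n-1,n-1)$-form, and the converse by Hahn--Banach separation of the normalized, weak-$\ast$ compact slice $K$ of the cone of positive $(1,1)$-currents from the subspace $B$ of $(1,1)$-components of boundaries, with the separating functional realized by a smooth $(n-1,n-1)$-form (currents carry the weak-$\ast$ topology dual to smooth forms), $d$-closed because it annihilates $B$, and strictly positive because it is positive on the Dirac-type generators of the cone. The reduction to producing a closed strictly positive $(n-1,n-1)$-form via the $(n-1)$-st root $\omega\mapsto\omega^{n-1}$ is also the standard one and is the observation the paper itself recalls just before the statement.

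However, the step you flag as ``the main obstacle'' is in fact the crux of the theorem, and your proposed fix does not close it. To separate $K$ from $B$ you need $K\cap\overline{B}=\emptyset$, whereas the hypothesis only gives $K\cap B=\emptyset$; and ``$d$ has closed range'' is not the relevant assertion, because $B$ is not the image of $d$ but the image of the operator $Q\mapsto\partial\overline{Q}+\overline{\partial}Q$ on $(1,0)$-currents (equivalently $S\mapsto\pi_{1,1}dS$ on degree-one currents), and what must be shown is that this image is weak-$\ast$ closed in $\mathcal{D}'^{1,1}(M)$. This is where Michelsohn (following Sullivan and Harvey--Lawson) does the real work: the image of a differential operator on currents over a compact manifold is weakly closed if and only if the image of its transpose on smooth forms is closed in the $C^\infty$ topology, and for the transpose $\phi\mapsto d\phi$ on $(n-1,n-1)$-forms this is established by elliptic and Hodge-theoretic arguments. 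Without this, a sequence $\pi_{1,1}dS_j$ could a priori converge weakly to a nonzero positive current lying in $K$, and the separation argument would collapse. So your outline is the correct proof strategy, but it remains incomplete until the weak-$\ast$ closedness of $B$ is actually established.
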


In 1993, L. Alessandrini and G. Bassanelli proved the following important theorem, which is the foundation of this paper.
\begin{theorem}
 [\cite{7}, Corollary 5.7] Let $f:M\rightarrow N$ be a modification of  compact complex manifolds. Then $M$ is balanced if and only if $N$ is balanced.
 \end{theorem}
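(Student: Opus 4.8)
The plan is to deduce the statement entirely from Michelsohn's criterion (the Theorem of \cite{22} quoted above), which characterises balancedness of each of $M$ and $N$ by the absence of a nonzero positive $(1,1)$-current that is the $(1,1)$-component of a boundary. Write $E\subset N$ for the centre of the modification and $Z=f^{-1}(E)\subset M$ for the exceptional set, so that $f$ restricts to a biholomorphism $M\setminus Z\xrightarrow{\ \sim\ }N\setminus E$, both $E,Z$ are proper analytic subsets, and $\operatorname{codim}_N E\ge 2$. Since $\dim M=\dim N=n$, the direct image $f_*$ preserves bidegree, sends positive currents to positive currents and commutes with $d$; dually, $f^*$ sends a smooth positive (resp. closed) $(p,p)$-form to a smooth positive (resp. closed) $(p,p)$-form and also commutes with $d$. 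These formal facts, together with the observation that a nonzero positive $(1,1)$-current pairs strictly positively against any strictly positive smooth $(n-1,n-1)$-form on an open set meeting its support, are the only inputs beyond Michelsohn's criterion.

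Suppose first that $N$ is balanced, with balanced metric $\omega_N$, so $\Omega_N:=\omega_N^{\,n-1}$ is a smooth, closed, strictly positive $(n-1,n-1)$-form. Then $f^*\Omega_N$ is a smooth closed positive $(n-1,n-1)$-form on $M$, strictly positive precisely on $M\setminus Z$. If $M$ were not balanced, Michelsohn's criterion would furnish a nonzero positive $S=(dR)^{1,1}$ on $M$; pairing against the closed form gives $\int_M S\wedge f^*\Omega_N=\pm\langle R,\,d(f^*\Omega_N)\rangle=0$ by Stokes, while strict positivity of $f^*\Omega_N$ off $Z$ forces the positive measure $S\wedge f^*\Omega_N$ to have no mass there, i.e. $\operatorname{supp}S\subseteq Z$. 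Thus this direction reduces to excluding a nonzero positive, boundary-component $(1,1)$-current carried by the exceptional set.

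For the converse, assume $M$ balanced with $\Omega_M:=\omega_M^{\,n-1}$ smooth, closed and strictly positive on \emph{all} of $M$. If $N$ were not balanced we would obtain a nonzero positive $S_N=(dR_N)^{1,1}$ on $N$. Where $S_N$ has mass off $E$ I transport it through the biholomorphism $f\colon M\setminus Z\to N\setminus E$ and extend the resulting positive current trivially across $Z$ (Skoda--El Mir), obtaining a positive $(1,1)$-current on $M$ which, since $f^*$ commutes with $d$, is again the $(1,1)$-component of a boundary; pairing it against $\Omega_M$ — strictly positive everywhere — gives simultaneously $0$ and a strictly positive number, a contradiction, unless this transported current vanishes. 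Hence the only surviving possibility is $\operatorname{supp}S_N\subseteq E$, and this direction likewise reduces to excluding a nonzero positive, boundary-component $(1,1)$-current carried by the centre.

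The main obstacle is exactly this reduced assertion: a nonzero positive $(1,1)$-current supported on the exceptional locus of the modification cannot be the $(1,1)$-component of a boundary. Being a boundary component makes such a current cohomologically trivial, whereas positive currents supported on the exceptional locus are governed by its nontrivial cohomology. I would pin them down with the structure theory of positive currents on analytic sets (Skoda--El Mir extension and Siu's support/decomposition theorems), after reducing to the local model of a blow-up along a smooth centre, where the exceptional divisor is a projective bundle $\mathbb{P}(N_{E/N})$ and the Poincaré--Lelong formula identifies the admissible closed positive currents with multiples of the exceptional integration current; these have nontrivial class and so cannot be boundary components, forcing $S=0$. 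The delicate points, which carry all the real work, are controlling the mass of $S$ near the exceptional locus so that the trivial extension is legitimate, preserving the boundary-component structure under transport, and passing from the local blow-up model back to the given modification.
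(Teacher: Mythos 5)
First, a point of comparison: the paper does not prove this statement at all --- it is quoted verbatim from Alessandrini--Bassanelli (\cite{7}, Corollary 5.7) and used as a black box throughout, so there is no internal proof to measure your attempt against. Judged on its own, your sketch correctly identifies the standard reduction via Michelsohn's criterion, and your setup for the direction ``$N$ balanced $\Rightarrow M$ balanced'' is exactly the one this paper itself uses in its singular analogues (Theorems 4.5, 4.6 and Proposition 4.7): pair a putative positive boundary-component current $S$ against the closed form $f^*\Omega_N$, conclude $\operatorname{supp}S\subseteq Z$, then apply the structure theorem (Theorem 4.4) to get $S=\sum c_i[Z_i]$. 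But the step you defer to ``the local blow-up model'' is precisely where all the content of \cite{7} lives, and your outline of it does not go through: a general modification is not locally a blow-up along a smooth centre (Hironaka only gives domination by a tower of such blow-ups, and transferring the non-boundary-component property down the tower is itself nontrivial); and even for a single smooth blow-up, to show that $\sum c_i[Z_i]$ with some $c_i>0$ is not the $(1,1)$-component of a boundary you must exhibit a closed $(n-1,n-1)$-form pairing strictly positively with it. The natural candidates fail: $\int_{Z_i}f^*\Omega_N=0$ because $f(Z_i)$ has dimension $\le n-2$, and $\omega_N^{n-2}$ is not closed when $N$ is merely balanced, so ``nontrivial cohomology class'' does not by itself rule out being a boundary component (these live in Aeppli-type, not de Rham, cohomology). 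Tellingly, the present paper cannot establish this exclusion even as a lemma in its own setting and instead \emph{assumes} it as a hypothesis (Theorem 3.2 and condition (\ref{cond})).

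The converse direction has a more serious structural flaw. Transporting $S_N$ from $N\setminus E$ to $M\setminus Z$ and extending by zero faces two obstacles: the transported current need not have locally finite mass near $Z$ (the mass is $\int S_N\wedge f_*(\omega_M^{n-1})$, and $f_*(\omega_M^{n-1})$ blows up near $E$; moreover Skoda--El Mir applies to closed or plurisubharmonic positive currents, while $S_N$ is only positive and $\partial\overline{\partial}$-closed); and, fatally, even if the trivial extension exists there is no reason it is again the $(1,1)$-component of a boundary, because the primitive $R_N$ lives on $N$ and neither transports through $f^{-1}$ nor extends across $Z$ --- ``$f^*$ commutes with $d$'' is a statement about forms and produces no global primitive on $M$. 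This lifting of positive boundary-component $(1,1)$-currents across a modification is exactly the hard technical core of \cite{5}, \cite{6}, \cite{7} and cannot be waved through. (Had the transport worked, your endgame would indeed be immediate: $\operatorname{supp}S_N\subseteq E$ with $\operatorname{codim}E\ge 2$ forces $S_N=0$ by Theorem 4.4, since $E$ has no $(n-1)$-dimensional components.)
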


 In this paper, we give two kinds of generalization of balanced manifolds to reduced complex spaces: the class $\mathscr{B}$  and balanced spaces. In section $2$, we study the properties of the class $\mathscr{B}$, and give some examples which are in $\mathscr{B}$, but neither in $\mathscr{C}$ nor balanced manifolds. In section $3$, we study families of reduced complex spaces over nonsingular curves and give some theorems on the total space being in $\mathscr{B}$. In section $4$, we study the relation between compact balanced spaces and the class $\mathscr{B}$. In the end, for the reader's convenience, we give an appendix about positivity.

\section{The class $\mathscr{B}$}

\begin{definition}
 A reduced  compact complex space $X$ is called in the class $\mathscr{B}$, if it has a resolution of singularities $\widetilde{X}$ which is a balanced manifold.
\end{definition}

Note that if a desingularization of $X$ is balanced, then every desingularization of $X$ is balanced. Indeed, if $X_1\to{X}$ and $X_2\to X$ are two resolutions of singularities of $X$, then there exists a bimeromorphic map  $f: X_1\dashrightarrow X_2$. Let $\Gamma\subseteq X_1\times X_2$ be the graph of $f$, and $p_1:\Gamma\rightarrow X_1$, $p_2:\Gamma\to X_2$ the two projections on $X_1$, $X_2$, respectively. Then $p_1$, $p_2$ are modifications. If $\widetilde{\Gamma}$ is a resolution of singularities of $\Gamma$, then $\widetilde{\Gamma}\to X_1$ and $\widetilde{\Gamma}\to X_2$ are modifications of compact complex manifolds. By Theorem 1.2, we know that $X_1$ is  balanced if and only if $\widetilde{\Gamma}$ is balanced, and then if and only if $X_2$ is balanced. Hence Definition 2.1 is not dependent on the choice of the resolution of singularities of $X$.

Using the same method as above, we can prove the following proposition.

\begin{proposition}
 The class $\mathscr{B}$ is invariant under bimeromorphic maps.
\end{proposition}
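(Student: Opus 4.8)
The plan is to reduce the statement to the manifold case, exactly as in the argument preceding Definition 2.1, by lifting a bimeromorphic map between the two spaces to a bimeromorphic map between their resolutions and then invoking Theorem 1.2. Suppose $X\in\mathscr{B}$ and let $g:X\dashrightarrow Y$ be a bimeromorphic map to another reduced compact complex space $Y$. First I would choose resolutions of singularities $\pi_X:\widetilde{X}\to X$ and $\pi_Y:\widetilde{Y}\to Y$; these exist and are in particular modifications. Since $X\in\mathscr{B}$, and by the remark following Definition 2.1 the balanced property is independent of the chosen resolution, the manifold $\widetilde{X}$ is balanced. The composition $\pi_Y^{-1}\circ g\circ\pi_X$ then defines a bimeromorphic map $h:\widetilde{X}\dashrightarrow\widetilde{Y}$ between compact complex manifolds, because modifications are bimeromorphic and bimeromorphic maps are closed under composition and inversion.

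Next I would run the same graph construction as above. Let $\Gamma\subseteq\widetilde{X}\times\widetilde{Y}$ be the graph of $h$, with the two projections $p_1:\Gamma\to\widetilde{X}$ and $p_2:\Gamma\to\widetilde{Y}$, both of which are modifications. Taking a resolution of singularities $\widetilde{\Gamma}\to\Gamma$, the composites $\widetilde{\Gamma}\to\widetilde{X}$ and $\widetilde{\Gamma}\to\widetilde{Y}$ are modifications of compact complex manifolds. Applying Theorem 1.2 twice then yields: $\widetilde{X}$ is balanced if and only if $\widetilde{\Gamma}$ is balanced, and if and only if $\widetilde{Y}$ is balanced. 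Since $\widetilde{X}$ is balanced, so is $\widetilde{Y}$; hence $Y$ admits a balanced resolution, that is, $Y\in\mathscr{B}$.

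The only point requiring care — and the main, if mild, obstacle — is verifying that the induced map $h$ on resolutions is genuinely bimeromorphic and that $p_1,p_2$ are modifications, so that Theorem 1.2 applies as stated. This is where one uses that a resolution map is itself a modification and that the bimeromorphic maps between compact complex manifolds are stable under composition and inversion. All of the analytic content is already packaged inside Theorem 1.2 (via Michelsohn's characterization of Theorem 1.1), so no further positivity estimates are needed; the proof is purely a matter of organizing the correspondence of spaces and applying the invariance of balancedness under modifications.
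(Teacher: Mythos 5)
Your proposal is correct and follows essentially the same route the paper intends: the paper proves the proposition by the graph construction it spells out just before Definition~2.1 (lift the bimeromorphic map to the resolutions, pass to a desingularized graph, and apply Theorem~1.2 twice), which is precisely what you do. No gaps.
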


According to Definition 2.1 and Proposition 2.2, compact balanced manifolds, reduced compact complex spaces in the Fujiki class $\mathscr{C}$ and the normalizations of complex spaces in $\mathscr{B}$ are all in $\mathscr{B}$.
Obviously, if $X\in \mathscr{B}$ is nonsingular, then $X$ is a balanced manifold.

\begin{proposition}
If $X$ and $Y$ are reduced compact complex spaces, then $X\times Y$ is in the class $\mathscr{B}$ if and only if   $X$ and $Y$ are both in the class $\mathscr{B}$.
\begin{proof}
Let $f:(\widetilde{X}, \widetilde{E})\rightarrow(X, E)$ and $g:(\widetilde{Y}, \widetilde{F})\rightarrow (Y, F)$ be resolutions of singularities, where $\widetilde{E}$ and  $\widetilde{F}$ are the exceptional sets, and  $f(\widetilde{E})= E$, $g(\widetilde{F})= F$. If we define
\begin{displaymath}
G:=(E\times Y)\cup(X\times F),  \qquad  \widetilde{G}:=(\widetilde{E}\times \widetilde{Y})\cup(\widetilde{X}\times \widetilde{F}),
\end{displaymath}
then
\begin{displaymath}
f\times g:(\widetilde{X}\times \widetilde{Y}, \widetilde{G})\rightarrow (X\times Y, G)
\end{displaymath}
is a resolution of singularities of $X\times Y$.

Now, if $X$, $Y\in\mathscr{B}$, then $\widetilde{X}$ and $\widetilde{Y}$ are balanced manifolds, so $\widetilde{X}\times\widetilde{Y}$ is balanced. Hence $X\times Y \in\mathscr{B}$. Conversely, if $X\times Y\in\mathscr{B b 2}$, then $\widetilde{X}\times\widetilde{Y}$ is balanced by \cite{22}, Proposition 1.9 (i). By \cite{22}, Proposition 1.9 (ii) , $\widetilde{X}$ and $\widetilde{Y}$ are balanced, and hence, $X$ and $Y$ are both in the class $\mathscr{B}$.
\end{proof}
\end{proposition}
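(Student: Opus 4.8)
The plan is to reduce the claim about $X\times Y$ to the analogous statement about balanced \emph{manifolds}, by producing one convenient resolution of singularities of $X\times Y$ and then invoking the fact (recorded after Definition 2.1) that membership in $\mathscr{B}$ is independent of the chosen resolution.

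First I would fix resolutions of singularities $f:(\widetilde{X},\widetilde{E})\to(X,E)$ and $g:(\widetilde{Y},\widetilde{F})\to(Y,F)$, with $E$, $F$ the singular loci and $\widetilde{E}=f^{-1}(E)$, $\widetilde{F}=g^{-1}(F)$ the exceptional sets. The key observation is that the product map $f\times g:\widetilde{X}\times\widetilde{Y}\to X\times Y$ is itself a resolution of singularities. The source is smooth, being a product of complex manifolds; the map is proper, being a product of proper maps; and it restricts to a biholomorphism over the dense open set $(X\setminus E)\times(Y\setminus F)$, since there $f$ and $g$ are each biholomorphic. Thus $f\times g$ is a proper modification with smooth source, with exceptional set $(f\times g)^{-1}(G)=\widetilde{G}$, where $G=(E\times Y)\cup(X\times F)$ and $\widetilde{G}=(\widetilde{E}\times\widetilde{Y})\cup(\widetilde{X}\times\widetilde{F})$; here one uses the standard identification of the singular locus of a product, $\mathrm{Sing}(X\times Y)=(\mathrm{Sing}(X)\times Y)\cup(X\times \mathrm{Sing}(Y))$.

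With this resolution in hand the proposition becomes a chain of equivalences. By definition of $\mathscr{B}$ and its resolution-independence, $X\times Y\in\mathscr{B}$ iff $\widetilde{X}\times\widetilde{Y}$ is balanced, while $X\in\mathscr{B}$ iff $\widetilde{X}$ is balanced and $Y\in\mathscr{B}$ iff $\widetilde{Y}$ is balanced. It therefore suffices to prove the smooth statement that $\widetilde{X}\times\widetilde{Y}$ is balanced if and only if both $\widetilde{X}$ and $\widetilde{Y}$ are balanced; this is exactly Michelsohn's Proposition 1.9, part (i) giving that a product of balanced manifolds is balanced and part (ii) giving that each factor of a balanced product is balanced. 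I expect the only real point requiring care --- the ``main obstacle,'' though a mild one --- to be the verification that $f\times g$ is genuinely a resolution, i.e.\ checking properness, smoothness of the source, and that the exceptional locus is the nowhere-dense set $\widetilde{G}$; once this is granted, everything else is formal, combining resolution-independence with the two halves of Michelsohn's product theorem.
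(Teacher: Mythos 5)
Your proposal is correct and follows essentially the same route as the paper: construct the product resolution $f\times g:\widetilde{X}\times\widetilde{Y}\to X\times Y$, use resolution-independence of membership in $\mathscr{B}$, and reduce to Michelsohn's Proposition 1.9 (i) and (ii) for products of balanced manifolds. Your extra care in verifying that $f\times g$ is genuinely a resolution (properness, biholomorphism off $G$, identification of the singular locus of a product) is a reasonable elaboration of a step the paper states without proof.
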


Using this proposition, we can construct some examples of complex spaces in $\mathscr{B}$ which are neither balanced manifolds nor in $\mathscr{C}$. If $X$ is a singular reduced compact complex space in $\mathscr{C}$ and $Y$ is a compact balanced manifold but not in $\mathscr{C}$, then $X\times Y$ is in $\mathscr{B}$, but it is neither a balanced manifold nor in $\mathscr{C}$. Obviously, $X\times Y$ is singular, so it is not a  balanced manifold. By Proposition 2.3, $X\times Y\in \mathscr{B}$. Consider the surjective holomorphic map $X\times Y\rightarrow Y$, by \cite{16}, Lemma 4.6.(3), we know $X\times Y$ is not in $\mathscr{C}$.

Suppose $dimX=n$. For $n=2$,  $X\in\mathscr{B}$ if and only if $X\in\mathscr{C}$, since for a smooth complex surface, the balanced and K\"ahlerian properties are equivalent.
For any $n\geq4$, if $Y$ is a singular projective variety of dimension $n-3$ and $I_3$ is the Iwasawa manifold, then $Y\times I_3$ is in class $\mathscr{B}$, which is neither a balanced manifold nor in $\mathscr{C}$.

If $X$ is a reduced compact complex space of pure dimension, then $X \in\mathscr{B}$ if and only if every irreducible component of $X$ is in $\mathscr{B}$. Indeed, if let  $\widetilde{X_1}$, $\ldots$, $\widetilde{X_r}$ be the resolutions of singularities of $X_1$, $\ldots$, $X_r$,  all the irreducible components of $X$, then the disjoint union $\widetilde{X}$:=$\widetilde{X_1} \amalg \ldots \amalg \widetilde{X_r}$ is a resolution of singularities of $X$. Hence the conclusion follows since $\widetilde{X}$ is balanced if and only if $\widetilde{X_1}$, $\ldots$, $\widetilde{X_r}$ are all balanced.

In the following we need the definition of a smooth morphism (\cite{11}, (0.4)). A surjective holomorphic map $f:X\rightarrow Y$ between reduced complex spaces is called a \emph{smooth} morphism, if for all $x\in X$, there is an open neighbourhood $W$ of $x$ in $X$, an open neighbourhood $U$ of $f(x)$ in $Y$, such that $f(W)= U$ and there is a commutative diagram
\begin{displaymath}
\xymatrix{
  W \ar[d]_{g} \ar[r]^{f|_W} &  U      \\
  \Delta^n\times U \ar[ur]_{pr_2}     }
\end{displaymath}
where $n= dimX- dimY$, $g$ is an isomorphism, $pr_2$ is the second projection, and $\Delta^n$ is a small polydisc. Moreover, if $dimX= dimY$, a smooth morphism is exactly a \emph{local isomorphism}.

Obviously, if $f:X\rightarrow Y$ is a smooth morphism, and $Y$ is a complex manifold, then $X$ must also be a complex manifold and $f$ is a submersion between complex manifolds.

\begin{proposition}
Let $f:X\rightarrow Y$ be a smooth morphism of  reduced compact complex spaces. If $X \in\mathscr{B}$, then $Y \in\mathscr{B}$.
\end{proposition}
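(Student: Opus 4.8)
The plan is to reduce everything to a statement about a proper holomorphic submersion of compact complex manifolds, by pulling a resolution of $Y$ back along $f$. Concretely, I would fix a resolution of singularities $\sigma:\widetilde{Y}\to Y$ and form the fibre product $Z:=X\times_Y\widetilde{Y}$, with its two projections $p:Z\to X$ and $\pi:Z\to\widetilde{Y}$. The first task is to check that $Z$ is a compact complex manifold. Since a smooth morphism is stable under base change, $\pi$ is again a smooth morphism; because its target $\widetilde{Y}$ is nonsingular, the fact recorded just before the proposition shows that $Z$ is smooth and $\pi$ is a submersion. Compactness of $Z$ follows from properness of $\pi$ (the base change of the proper map $f$, which is proper since $X$ is compact) together with compactness of $\widetilde{Y}$.

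Next I would show that $p:Z\to X$ is a modification, so that $Z$ is a smooth compact model of $X$. Write $A\subset Y$ for the nowhere dense analytic set over which $\sigma$ fails to be biholomorphic. A smooth morphism is open, so $f^{-1}(A)$ is a nowhere dense analytic subset of $X$, and over $X\setminus f^{-1}(A)$ the map $p$ is the base change of the isomorphism $\sigma|_{Y\setminus A}$, hence an isomorphism; as $p$ is also proper (the base change of the proper map $\sigma$), it is a modification. Therefore $Z$ is a smooth compact complex manifold bimeromorphic to $X$. Since $X\in\mathscr{B}$, Proposition 2.2 gives $Z\in\mathscr{B}$, and as $Z$ is nonsingular $Z$ is a balanced manifold.

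It remains to push the balanced structure down along the proper surjective submersion $\pi:Z\to\widetilde{Y}$ (surjectivity being immediate from surjectivity of $f$ and of $\sigma$). Let $m=\dim Z$, $n=\dim\widetilde{Y}$ and $k=m-n$ the fibre dimension, and let $\Omega$ be a $d$-closed strictly positive $(m-1,m-1)$-form on $Z$ coming from a balanced metric. I would set $\Psi:=\pi_{*}\Omega$, the fibre integral, a $(n-1,n-1)$-form on $\widetilde{Y}$. Fibre integration commutes with $d$, so $d\Psi=\pi_{*}(d\Omega)=0$. For strict positivity I would test $\Psi$ against $i\,\alpha\wedge\bar\alpha$ for a nonzero $(1,0)$-covector $\alpha$ at a point of $\widetilde{Y}$: by the projection formula $\Psi\wedge i\,\alpha\wedge\bar\alpha=\pi_{*}\bigl(\Omega\wedge i\,\pi^{*}\alpha\wedge\overline{\pi^{*}\alpha}\bigr)$, and since $\pi$ is a submersion $\pi^{*}\alpha$ is a nonzero $(1,0)$-covector, so strict positivity of $\Omega$ makes the integrand a positive volume form on $Z$; integrating over the compact fibres keeps it positive. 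Hence $\Psi$ is a $d$-closed strictly positive $(n-1,n-1)$-form, and by Michelsohn's observation recalled in the introduction $\widetilde{Y}$ is balanced, i.e. $Y\in\mathscr{B}$.

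The main obstacle I anticipate is precisely this last step: verifying that fibre integration carries the strictly positive $(m-1,m-1)$-form witnessing the balanced metric on $Z$ to a strictly positive (not merely positive) form on $\widetilde{Y}$, where the properness and genuine positive-dimensionality of the fibres and the behaviour of positivity under the projection formula must all be used together. One could instead avoid fibre integration by pulling a hypothetical positive $(1,1)$-current that is the $(1,1)$-component of a boundary on $\widetilde{Y}$ back to $Z$ and invoking Theorem 1.1, but the positivity bookkeeping would be essentially the same.
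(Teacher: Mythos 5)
Your proposal is correct and follows essentially the same route as the paper: resolve $Y$, base change along $f$ to get a smooth compact model of $X$ mapping onto $\widetilde{Y}$ by a proper surjective submersion, and push the balanced condition down. The only difference is that where the paper simply cites Michelsohn's Proposition 1.9(ii) for the last step, you reprove it by fibre integration — which is exactly the content of that cited result.
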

\begin{proof}
 Suppose $p:\widetilde{Y}\rightarrow Y$ is a resolution of singularities. Consider the following Cartesian
\begin{equation}\label{Car}
\xymatrix{
\widetilde{X}:=X\times_Y \widetilde{Y} \ar[d]_{q} \ar[r]^-{\tilde{f}}  & \widetilde{Y} \ar[d]^{p}  \\
  X \ar[r]_{f}  & Y}
\end{equation}
We have the following two claims.
\vspace{2mm}

\noindent \textbf{Claim 1.} $\tilde{f}$ is a smooth morphism, i.e. a submersion of complex manifolds.
\begin{proof}
For arbitrary $\tilde{x}=(x, \tilde{y})\in\widetilde{X}$, such that $f(x)= p(\tilde{y})$, we choose an open neighbourhood W of $x$ in $X$, an open neighbourhood U of $f(x)$ in $Y$, such that $W\cong\Delta^n\times U$, and under this isomorphism, $f|_W$ is exactly the second projection $pr_2:\Delta^n\times U\rightarrow U$. Considering the neighbourhoods $W\times_U p^{-1}(U)$ of $\tilde{x}=(x, \tilde{y})$ and $p^{-1}(U)$ of $\tilde{y}$, we have
\begin{displaymath}
W\times_U p^{-1}(U)\cong(\Delta^n\times U)\times_U p^{-1}(U)\cong\Delta^n\times p^{-1}(U)
\end{displaymath}
 and under this isomorphism, the second projection $\Delta^n\times p^{-1}(U)\rightarrow p^{-1}(U)$ is exactly $\tilde{f}:W\times_U p^{-1}(U)\rightarrow p^{-1}(U)$. Hence, $\tilde{f}$ is a smooth morphism.
\end{proof}
\vspace{2mm}

\noindent \textbf{Claim 2.} $q$ is a modification, hence a resolution of singularities of $X$.

\begin{proof}We set $\widetilde{F}\subset \widetilde{Y} $, the exceptional set of $p$,  and $F= p(\widetilde{F})$. For arbitrary $x\in X$, we choose $W$ and $U$ as above, then
\begin{displaymath}
f^{-1}(F)\cap W\cong\Delta^n\times (U\cap F)
\end{displaymath}
is a nowhere dense subset of $W\cong\Delta^n\times U$, hence  $f^{-1}(F)$ is nowhere dense in $X$. Moreover,
\begin{displaymath}
q^{-1}(f^{-1}(F))\cap W\times_U p^{-1}(U)= W\times_U (p^{-1}(U)\cap \widetilde{F})\cong  \Delta^n\times(p^{-1}(U)\cap \widetilde{F})
\end{displaymath}
is a nowhere dense subset of $W\times_U p^{-1}(U)\cong\Delta^n\times p^{-1}(U)$, hence  $q^{-1}(f^{-1}(F))$ is nowhere dense in $\widetilde{X}$.

On the other hand,
\begin{displaymath}
q:\widetilde{X}-q^{-1}(f^{-1}(F))=(X-f^{-1}(F))\times_{(Y-F)} (\widetilde{Y}-\widetilde{F})\rightarrow X-f^{-1}(F)
\end{displaymath}
 is obviously an isomorphism, since $ \widetilde{Y}-\widetilde{F}\rightarrow Y-F$ is isomorphic.
\end{proof}

Now, we can prove this proposition. Since $X \in\mathscr{B}$, $\widetilde{X}$ is a balanced manifold, so is $\widetilde{Y}$ by \cite{22}, Proposition 1.9(ii), hence $Y$ is in the class $\mathscr{B}$.
\end{proof}

\begin{proposition}
If $f:X\rightarrow Y$ is a local isomorphism of reduced compact complex spaces, then $X\in\mathscr{B}$ if and only if $Y\in\mathscr{B}$. In particular, if $f$ is an unramified covering map, then $X\in\mathscr{B}$ if and only if $Y\in\mathscr{B}$.
\end{proposition}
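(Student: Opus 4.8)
The plan is to handle the two implications separately. For the implication $X\in\mathscr{B}\Rightarrow Y\in\mathscr{B}$, I would simply observe that a local isomorphism is precisely a smooth morphism with $\dim X=\dim Y$, so this direction is an immediate instance of Proposition 2.4 and requires no further argument. The substance of the proposition is therefore the reverse implication $Y\in\mathscr{B}\Rightarrow X\in\mathscr{B}$, which is genuinely new, since for a general smooth morphism passing from the base to the total space need not preserve membership in $\mathscr{B}$.

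For the reverse implication I would imitate the fiber-product construction of Proposition 2.4. Choose a resolution $p:\widetilde{Y}\to Y$ with $\widetilde{Y}$ balanced, and form the Cartesian square with $\widetilde{X}:=X\times_Y\widetilde{Y}$, projections $q:\widetilde{X}\to X$ and $\tilde{f}:\widetilde{X}\to\widetilde{Y}$. Exactly as in Claim 1 of Proposition 2.4, the base change $\tilde{f}$ is again a smooth morphism, but now of relative dimension $n=0$; that is, $\tilde{f}$ is itself a local isomorphism. Since $\widetilde{Y}$ is a complex manifold and $\tilde{f}$ is a local isomorphism, $\widetilde{X}$ is a complex manifold. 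Likewise, the argument of Claim 2 (with $\Delta^n$ a point) shows that $q$ is a modification, hence a resolution of singularities of $X$.

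It then remains to see that $\widetilde{X}$ is balanced. Here I would pull back the balanced structure along the local biholomorphism $\tilde{f}$: if $\omega$ denotes the balanced Hermitian form on $\widetilde{Y}$, so that $d(\omega^{n-1})=0$, then $\tilde{f}^{*}\omega$ is again a positive $(1,1)$-form because $\tilde{f}$ is a local isomorphism, and $d\bigl((\tilde{f}^{*}\omega)^{n-1}\bigr)=\tilde{f}^{*}\bigl(d(\omega^{n-1})\bigr)=0$, so $\tilde{f}^{*}\omega$ is a balanced metric on $\widetilde{X}$. (Equivalently, one pulls back a closed strictly positive $(n-1,n-1)$-form and invokes Michelsohn's characterization.) Thus $\widetilde{X}$ is a balanced resolution of $X$, giving $X\in\mathscr{B}$. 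The final assertion is then immediate, since an unramified covering map is in particular a local isomorphism.

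The argument is essentially routine; the only points needing care are the verification that the base change $\tilde{f}$ remains a local isomorphism (so that $\widetilde{X}$ is smooth) and that $q$ is still a modification, both of which are the equal-dimension specializations of the two claims already established in Proposition 2.4. The single genuinely new ingredient is that a local biholomorphism pulls a balanced metric back to a balanced metric, which is precisely what makes the reverse implication hold even though it can fail for general smooth morphisms.
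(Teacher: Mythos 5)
Your proposal is correct and follows essentially the same route as the paper: base-change a resolution $p:\widetilde{Y}\to Y$ to get a resolution $q:\widetilde{X}\to X$ with $\tilde{f}$ a local isomorphism (via Claims 1 and 2 of Proposition 2.4), pull back the balanced structure along $\tilde{f}$, and dispose of the forward implication by citing Proposition 2.4. The only cosmetic difference is that the paper pulls back a closed strictly positive $(n-1,n-1)$-form directly rather than the Hermitian form $\omega$, which you note as an equivalent variant.
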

\begin{proof}
 Suppose $p:\widetilde{Y}\rightarrow Y$ is a resolution of singularities. Consider the Cartesian diagram (\ref{Car}). We have shown that $\tilde{f}$ is a local isomorphism and $q$ is a modification by Claim 1 and Claim 2 in the proof of Proposition 2.4.

If $Y\in\mathscr{B}$, then $\widetilde{Y}$ is a balanced manifold. Suppose $\Omega_{\widetilde{Y}}$ is a closed strictly positive $(n-1, n-1)$-form on $\widetilde{Y}$, where $n=dim\widetilde{Y}$. For any $\tilde{x}\in\widetilde{X}$, there is an open neighbourhood $\widetilde{W}$ of $\tilde{x}$ in $\widetilde{X}$ and an open neighbourhood $\widetilde{U}$ of $\tilde{f}(\tilde{x})$ in $\widetilde{Y}$, such that $\widetilde{f}\mid_{\widetilde{W}}:\widetilde{W}\rightarrow \widetilde{U}$ is an isomorphism. Obviously,
$(\tilde{f}^*\Omega_{\widetilde{Y}})\mid_{\widetilde{W}} = (\tilde{f}\mid_{\widetilde{W}})^*(\Omega_{\widetilde{Y}}\mid_{\widetilde{U}})$ is strictly positive, hence $\tilde{f}^*\Omega_{\widetilde{Y}}$ is a closed strictly positive $(n-1, n-1)$-form on $\widetilde{X}$. Hence $\widetilde{X}$ is balanced and  $X\in\mathscr{B}$. Conversely, it is obvious by Proposition 2.4.
\end{proof}

Before considering the inverse of Proposition 2.4, we recall some definitions of the forms and currents on complex spaces, following \cite{20}.

Let $X$ be a reduced complex space, and $X_{reg}$, which is a complex manifold, be the set of nonsingular points in $X$.

Suppose that $X$ is  a subvariety of a complex manifold $M$. Let $I_X^{p, q}(M)= \{\alpha\in A^{p, q}(M) \mid i^*\alpha= 0\}$, where $i: X_{reg}\rightarrow M$ is the inclusion. Define $A^{p, q}(X):= A^{p, q}(M)/I_X^{p, q}(M)$. It can be shown that $A^{p, q}(X)$ is independent of the embedding of $X$ into $M$. Hence, for any complex space $X$, we can define $A^{p, q}(X)$ by using the local embeddings in $\mathbb{C}^N$. More precisely, we define a sheaf of germs $\mathcal{A}^{p, q}_X$ of $(p, q)$-forms on $X$ and $A^{p, q}(X)$ as the group of its global sections. Similarly, we can also define $A_c^{p, q}(X)$ ($(p, q)$-forms with compact supports), $A^k(X)$ and $A_c^k(X)$.

We can naturally define $\partial:A^{p, q}(X)\rightarrow A^{p+1, q}(X)$,  $\overline{\partial}:A^{p, q}(X)\rightarrow A^{p, q+1}(X)$ and $d:A^k(X)\rightarrow A^{k+1}(X)$.

If $f: X\rightarrow Y$ is a holomorphic map of reduced complex spaces, then we can naturally define $f^*:A^{p, q}(Y)\rightarrow A^{p, q}(X)$ such that $f^*$ commutes with $\partial$, $\overline{\partial}$, $d$.

When $X$ is a subvariety of a complex manifold $M$, we define the space of currents on $X$
\begin{displaymath}
\mathcal{D}'^{2n-r}(X)=\mathcal{D}_r'(X):= \{T\in \mathcal{D}_r'(M)\mid T(u)= 0, \forall u\in I_{X, c}^r(M)\},
\end{displaymath}
where $\mathcal{D}_r'(M)$ is the space of currents on $M$, $I_{X, c}^r(M)= \{\alpha\in A_c^{p, q}(M) \mid i^*\alpha= 0\}$. As the case of $A^r(X)$, we can define a space $\mathcal{D}_r'(X)$ of the currents on any reduced complex space $X$. We define
\begin{displaymath}
\mathcal{D}'^{n-p, n-q}(X)=\mathcal{D}_{p, q}'(X):= \{T\in \mathcal{D}_{p+q}'(X) \mid T(u)= 0, \forall u\in A_c^{r, s}(M), (r, s)\neq (p, q)\}.
\end{displaymath}
A current $T$ is called a \emph{$(p, q)$-current} on $X$, if $T\in\mathcal{D}'^{p, q}(X)$.
If $T\in \mathcal{D}_r'(X)=\mathcal{D}'^{2n-r}(X)$, we call $r$ the \emph{dimension},  $2n-r$ the \emph{degree} of the current $T$. If $T\in \mathcal{D}_{p, q}'(X)=\mathcal{D}'^{n-p, n-q}(X)$, we call $(p, q)$ the \emph{bidimension}, $(n-p, n-q)$ the \emph{bidegree} of the current $T$.

\textbf{Note.} We use the terminology here as most papers now, but in \cite{22}, \emph{$(p, q)$-current} denotes a current of bidimension $(p, q)$.

When $f: X\rightarrow Y$ is a holomorphic map of reduced compact complex spaces, we can define $f_*:\mathcal{D}_r'(X)\rightarrow \mathcal{D}_r'(Y)$ $f_*T(u):= T(f^*u)$ for any  $u\in A_c^r(Y)$. Obviously, $f_*(\mathcal{D}_{p, q}'(X))\subseteq \mathcal{D}_{p, q}'(Y)$.

A current $T\in \mathcal{D}'^{p, p}(X)$ is called \emph{real} if for every $\alpha\in A_c^{2n-2p}(X)$, $T(\overline{\alpha})= \overline{T(\alpha)}$. A real current $T\in \mathcal{D}'^{p, p}(X)$ is called \emph{the $(p, p)$-component of a boundary} if there exists a current $S$ such that for any $\alpha\in A_c^{2n-2p}(X)$, $T(\alpha)= S(d\alpha^{n-p, n-p})$, where $\alpha^{n-p,n-p}$  is the $(n-p, n-p)$-part of $\alpha$. We write $T= \pi_{p, p}dS$, which is equivalent to that there exists a $(p, p-1)$-current Q such that $T=\partial\overline{Q}+ \overline{\partial} Q$.

A real $(p, p)$-form $\omega$ on $X$ is called \emph{strictly positive}, if there exist an open covering $\mathcal{U}= \{U_\alpha\}$ of $X$ with an embedding $i_\alpha:U_\alpha\rightarrow V_\alpha$ of $U_\alpha$ into a subdomain $V_\alpha$ in $\mathbb{C}^{n_\alpha}$ and a strictly positive $(p, p)$-form $\omega_\alpha$ on $V_\alpha$, such that $\omega\mid_{U_\alpha}= i_\alpha^*\omega_\alpha$, for each $\alpha$.

Now, we consider the inverse of Proposition 2.4, which is a generalization of Corollary 3.10 in \cite{1}.

\begin{proposition} Let $f:X\rightarrow Y$ be a smooth morphism of  reduced compact complex spaces, and $n= dimX> m=dim Y\geq 2$. If $Y \in\mathscr{B}$ and there exists a point $y_0$ in $Y$ such that the fibre $f^{-1}(y_0)$ is not the $(m, m)$-component of a boundary, then $X \in\mathscr{B}$.
\end{proposition}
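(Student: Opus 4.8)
The plan is to reduce to a holomorphic submersion of compact complex \emph{manifolds} with balanced base, and then to transport the fibre hypothesis through the resolution. First I would choose a resolution of singularities $p:\widetilde{Y}\rightarrow Y$; since $Y\in\mathscr{B}$, $\widetilde{Y}$ is a balanced manifold. Forming the Cartesian diagram (\ref{Car}) with $\widetilde{X}:=X\times_Y\widetilde{Y}$, Claim 1 and Claim 2 from the proof of Proposition 2.4 show that $\tilde{f}:\widetilde{X}\rightarrow\widetilde{Y}$ is a submersion of compact complex manifolds and that $q:\widetilde{X}\rightarrow X$ is a resolution of singularities of $X$. In particular $\widetilde{X}$ is a compact complex manifold with $\dim\widetilde{X}=n$ and $\dim\widetilde{Y}=m$, and it suffices to prove that $\widetilde{X}$ is balanced.

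Next I would locate the relevant fibre upstairs. Pick $\tilde{y}_0\in p^{-1}(y_0)$, which is nonempty since $p$ is surjective. Because $\widetilde{X}=\{(x,\tilde{y}):f(x)=p(\tilde{y})\}$ and $\tilde{f}(x,\tilde{y})=\tilde{y}$, we have $\tilde{f}^{-1}(\tilde{y}_0)=f^{-1}(y_0)\times\{\tilde{y}_0\}$, and $q$ restricts to a biholomorphism of $\tilde{f}^{-1}(\tilde{y}_0)$ onto $f^{-1}(y_0)$; hence $q_*[\tilde{f}^{-1}(\tilde{y}_0)]=[f^{-1}(y_0)]$ as $(m,m)$-currents. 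The key step is to show that $\tilde{f}^{-1}(\tilde{y}_0)$ is \emph{not} the $(m,m)$-component of a boundary on $\widetilde{X}$. I would argue by contradiction: if $[\tilde{f}^{-1}(\tilde{y}_0)]=\partial\overline{Q}+\overline{\partial}Q$ for some $(m,m-1)$-current $Q$ on $\widetilde{X}$, then pushing forward by $q$ and using that $q_*$ preserves bidegree and commutes with $\partial$, $\overline{\partial}$ and with conjugation (as $q$ is a proper, equidimensional holomorphic map), I obtain $[f^{-1}(y_0)]=\partial\overline{q_*Q}+\overline{\partial}(q_*Q)$, so $f^{-1}(y_0)$ would be the $(m,m)$-component of a boundary on $X$, contrary to hypothesis.

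At this point $\tilde{f}:\widetilde{X}\rightarrow\widetilde{Y}$ is a submersion of compact complex manifolds, $\widetilde{Y}$ is balanced, $n>m\geq 2$, and the fibre $\tilde{f}^{-1}(\tilde{y}_0)$ is not the $(m,m)$-component of a boundary. I would then invoke the manifold version of the statement, Corollary 3.10 of \cite{1}, to conclude that $\widetilde{X}$ is balanced, and therefore $X\in\mathscr{B}$. The analytic content of that manifold result rests on Michelsohn's criterion (Theorem 1.1): if $\widetilde{X}$ were not balanced there would be a nonzero positive $(1,1)$-current $T=\pi_{1,1}dS$, and the fibre hypothesis enters precisely to exclude such a current once one wedges with the pullback of the balanced form on $\widetilde{Y}$.

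I expect the main obstacle to be the transport step of the second paragraph, i.e.\ checking that the modification $q$ neither manufactures nor destroys the ``boundary-component'' property of the fibre. Two points need care: that $q$ is genuinely an isomorphism on the chosen fibre, with multiplicity one, so that $q_*[\tilde{f}^{-1}(\tilde{y}_0)]$ equals $[f^{-1}(y_0)]$ exactly and is not altered by contributions supported on the exceptional set; and that $q_*$ commutes with $\partial$ and $\overline{\partial}$ \emph{separately}, for which the equidimensionality of the modification (hence preservation of bidegree) is essential. The remaining, genuinely analytic, difficulty is packaged in Corollary 3.10 of \cite{1}.
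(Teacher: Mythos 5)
Your proof is correct and follows essentially the same route as the paper: resolve $Y$, form the fibre product, use Claims 1 and 2 from the proof of Proposition 2.4, push the boundary relation forward by $q$ (which is an isomorphism over the fibre) to transfer the hypothesis to $\tilde{f}^{-1}(\tilde{y}_0)$, and invoke Corollary 3.10 of \cite{1}. The only inessential difference is that the paper inserts one more step before citing that corollary --- all points of $\widetilde{Y}$ are homologous, so all fibres of $\tilde{f}$ are homologous as currents and hence none of them is the $(m,m)$-component of a boundary --- whereas you apply the corollary with the single-fibre hypothesis directly.
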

\begin{proof}
Considering the Catesian diagram (\ref{Car}), we have shown that $\tilde{f}$ is a submersion of complex manifolds and $q$ is a resolution of singularities of $X$.

 We first claim that for every $\tilde{y}\in p^{-1}(y_0)$, $\tilde{f}^{-1}(\tilde{y})$ can not be written as $\partial\overline{Q}+\overline{\partial}Q$ for some current $Q$ of bidegree $(m, m-1)$ on $\widetilde{X}$. If not, since $\tilde{f}^{-1}(\tilde{y})= f^{-1}(y_0)\times \{\tilde{y}\}$,  we have
\begin{displaymath}
[f^{-1}(y_0)]= q_*[\tilde{f}^{-1}(\tilde{y})]=q_*(\partial\overline{Q}+\overline{\partial}Q)=\partial\overline{q_*Q}+\overline{\partial}q_*Q,
\end{displaymath}
which contradicts the assumption.

Now suppose $\tilde{y}'$ is any point in $\widetilde{Y}$. Since $\tilde{f}$ is smooth and $[\tilde{y}]=\lambda[\tilde{y}']$ in $H^{2m}(\widetilde{Y} , \mathbb{R})$ for some constant $\lambda\in\mathbb{R}$, then
\begin{displaymath}
\lambda[\tilde{f}^{-1}(\tilde{y}')]= \lambda\tilde{f}^*[\tilde{y}']= \tilde{f}^*[\tilde{y}]=[\tilde{f}^{-1}(\tilde{y})]
\end{displaymath}
in $H^{2m}(\widetilde{X}, \mathbb{R})$, where $\tilde{y}\in p^{-1}(y_0)$, and $\tilde{f}^*: H^{2m}(\widetilde{Y}, \mathbb{R})\rightarrow H^{2m}(\widetilde{X}, \mathbb{R})$ is the pull back of $\tilde{f}$. Hence for every $\tilde{y}'\in \widetilde{Y}$, $\tilde{f}^{-1}(\tilde{y}')$ is not the $(m, m)$-component of a boundary. By \cite{1}, Corollary 3.10, $\widetilde{X}$ is balanced, hence $X \in\mathscr{B}$.
\end{proof}

Using the similar method, we can easily get a proposition of this type about the Fujiki class $\mathscr{C}$, which is a generalization of Corollary 3.8 in \cite{1}.
\begin{proposition} Let $f:X\rightarrow Y$ be a smooth morphism of reduced compact complex spaces. Let $n= dimX$ and  $m=dim Y\geq 2$ satisfy $n=m+1$. If $Y \in \mathscr{C}$ and there exists a point $y_0$ in $Y$ such that the fibre $f^{-1}(y_0)$ is not the $(m, m)$-component of a boundary, then $X \in\mathscr{C}$.
\end{proposition}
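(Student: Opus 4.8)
The plan is to imitate the proof of Proposition 2.6 line by line, feeding in the Fujiki class in place of the balanced condition and replacing the appeal to \cite{1}, Corollary 3.10 by \cite{1}, Corollary 3.8. First I would fix a resolution of singularities $p:\widetilde{Y}\rightarrow Y$ and form the Cartesian diagram (\ref{Car}). Claims 1 and 2 in the proof of Proposition 2.4 apply verbatim, so $\tilde{f}:\widetilde{X}\rightarrow\widetilde{Y}$ is a submersion of compact complex manifolds and $q:\widetilde{X}\rightarrow X$ is a resolution of singularities of $X$. Because $p$ is a modification and the Fujiki class $\mathscr{C}$ is bimeromorphically invariant, the hypothesis $Y\in\mathscr{C}$ yields that the compact complex manifold $\widetilde{Y}$ is itself in $\mathscr{C}$, i.e.\ bimeromorphic to a compact K\"ahler manifold.

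Next I would reproduce the two-step fibre argument of Proposition 2.6. For $\tilde{y}\in p^{-1}(y_0)$ we have $\tilde{f}^{-1}(\tilde{y})=f^{-1}(y_0)\times\{\tilde{y}\}$; were $[\tilde{f}^{-1}(\tilde{y})]=\partial\overline{Q}+\overline{\partial}Q$ for some $(m,m-1)$-current $Q$ on $\widetilde{X}$, then pushing forward along $q$ would give $[f^{-1}(y_0)]=\partial\overline{q_*Q}+\overline{\partial}q_*Q$, contradicting the hypothesis on $y_0$. Thus $\tilde{f}^{-1}(\tilde{y})$ is not the $(m,m)$-component of a boundary. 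For an arbitrary $\tilde{y}'\in\widetilde{Y}$ the point classes satisfy $[\tilde{y}]=\lambda[\tilde{y}']$ in $H^{2m}(\widetilde{Y},\mathbb{R})$ with $\lambda\neq0$, and smoothness of $\tilde{f}$ gives $\tilde{f}^*[\tilde{y}']=[\tilde{f}^{-1}(\tilde{y}')]$; hence $[\tilde{f}^{-1}(\tilde{y}')]$ is a nonzero multiple of $[\tilde{f}^{-1}(\tilde{y})]$ in $H^{2m}(\widetilde{X},\mathbb{R})$, and the same cohomological comparison as in Proposition 2.6 shows that no fibre $\tilde{f}^{-1}(\tilde{y}')$ is the $(m,m)$-component of a boundary.

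Finally I would apply \cite{1}, Corollary 3.8 to the submersion $\tilde{f}:\widetilde{X}\rightarrow\widetilde{Y}$ with $\widetilde{Y}\in\mathscr{C}$ and no fibre an $(m,m)$-component of a boundary, concluding $\widetilde{X}\in\mathscr{C}$; then, $q$ being a modification and $\mathscr{C}$ being bimeromorphically invariant, $X\in\mathscr{C}$. The step I expect to need the most care is the hypothesis $n=m+1$, which is absent from Proposition 2.6: it forces the fibres of $\tilde{f}$ to be one-dimensional, so that the fibre currents have bidimension $(1,1)$, equivalently bidegree $(m,m)=(n-1,n-1)$, which is precisely the bidegree of the positive currents entering the K\"ahler-type (Harvey--Lawson) characterization of the Fujiki class used in \cite{1}, Corollary 3.8. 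Without this dimensional match the fibres would no longer be admissible test objects and Corollary 3.8 could not be invoked, which is exactly why the balanced statement of Proposition 2.6 permits all $n>m\geq2$ while the present statement is restricted to $n=m+1$. Apart from this point and the use of modification-invariance of $\mathscr{C}$ (which replaces the resolution-independence exploited for the class $\mathscr{B}$), the argument is identical to that of Proposition 2.6.
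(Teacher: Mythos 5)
Your proof is correct and is essentially the paper's intended argument: the paper gives no separate proof of this proposition, saying only that it follows ``using the similar method'' as Proposition 2.6 with \cite{1}, Corollary 3.8 replacing Corollary 3.10, which is exactly what you carry out (including the push-forward of the boundary relation along $q$ and the cohomological comparison of fibres). Your closing remark on why $n=m+1$ is required --- so that the fibre currents have bidimension $(1,1)$, the bidegree relevant to the Harvey--Lawson-type criterion for the class $\mathscr{C}$ --- correctly identifies the one point where this statement genuinely differs from Proposition 2.6.
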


\section{Families  of complex spaces over a nonsingular curve}

\quad In this section, we study families of complex spaces over a curve. It should be useful in the study of deformations and moduli spaces of complex spaces. The following two definitions are generalizations of the corresponding notions defined in \cite{22}.

\begin{definition} Let $X$ be a reduced compact complex space of pure dimension $n$, and $f:X\rightarrow C$ a holomorphic map onto a nonsingular compact complex curve.

(1) $f$ is called \textup{essential}, if for every $p\in C$, no linear combination $\sum_jc_j[F_j]$ can be the $(1, 1)$-component of a boundary on $X$, where the ${F_j}^,s$ are all the irreducible components of the fibre $f^{-1}(p)$, $c_j\geq 0$ and at least one of the ${c_j}^,s$ is positive;

(2) $f$ is called \textup{topologically essential}, if for every $p\in C$, no linear combination $\sum_jc_j[F_j]$ is zero in $H_{2n-2}(X, \mathbb{R})$, where the ${F_j}^,s$ are all the irreducible components of the fibre $f^{-1}(p)$, $c_j\geq 0$ and at least one of the ${c_j}^,s$ is positive.
\end{definition}

We first note that, for any  reduced compact complex space $X$ of pure dimension $n$ and the holomorphic map $f:X\rightarrow C$ onto a nonsingular compact complex curve, by the open mapping theorem (\cite{19}, page 109), $f$ is an open map. Hence for every $p\in C$, every irreducible component of  $f^{-1}(p)$ has dimension $n-1$ (\cite{14}, 3.10 Theorem).

Moreover, the``essential" hypothesis implies ``topologically essential", and when $X$ is a complex manifold, the two concepts are equivalent in the cases when $dimX=2$ or $H^2(\Omega^\bullet)=0$, where $H^2(\Omega^\bullet)$ is the second cohomology of the complex of  global holomorphic forms
\begin{displaymath}
0\rightarrow \Gamma(X,\mathcal O)\rightarrow\Gamma(X,\Omega^1)\rightarrow\Gamma(X,\Omega^2)\rightarrow\Gamma(X,\Omega^3)\rightarrow\cdots.
\end{displaymath}
where the differentials in the complex are $\partial$. For more details, see \cite{22}, Proposition 5.3.

Now, we can generalize \cite{22}, Theorem 5.5 to the following situation.

\begin{theorem} Suppose $X$ is a purely $n$-dimensional compact normal complex space which admits an essential holomorphic map $f:X\rightarrow C$ onto a nonsingular compact complex curve $C$, and $X$ has a resolution of singularities $\pi:\widetilde{X}\to X$ such that no nonzero nonnegative linear combination of hypersurfaces contained in the exceptional set of $\pi$ can be the $(1, 1)$-component of a boundary on $\widetilde{X}$.  If every nonsingular  fiber of $f$ is a balanced manifold, then $X\in \mathscr{B}$.
\end{theorem}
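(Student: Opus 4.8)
The plan is to reduce the statement to the smooth total space $\widetilde X$ and to run a Michelsohn-type argument there, the point being that the essential hypothesis on $f$ and the exceptional-set hypothesis on $\pi$ combine, through the normality of $X$, into a single essentiality property for the induced map $\tilde f:=f\circ\pi:\widetilde X\to C$. By Definition 2.1 it suffices to prove that $\widetilde X$ is a balanced manifold. First I would check that the generic fibre of $\tilde f$ is a smooth balanced manifold: by generic smoothness there is a finite set $B\subset C$ off which both $f^{-1}(c)$ and $\tilde f^{-1}(c)$ are smooth; for $c\notin B$ the fibre $f^{-1}(c)$ is a nonsingular fibre of $f$, hence balanced by hypothesis, and since $\dim f^{-1}(c)=n-1>\dim\mathrm{Sing}(X)$ the restriction $\pi:\tilde f^{-1}(c)\to f^{-1}(c)$ is biholomorphic over the dense open set $f^{-1}(c)\cap X_{reg}$, i.e. a modification of compact complex manifolds, so $\tilde f^{-1}(c)$ is balanced by Theorem 1.2.

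The core of the argument is the claim that $\tilde f$ is essential, i.e. for every $c\in C$ no nonzero nonnegative combination of the irreducible components of $\tilde f^{-1}(c)$ is the $(1,1)$-component of a boundary on $\widetilde X$. To see this, fix $c$ and observe that each $(n-1)$-dimensional component $V$ of $\tilde f^{-1}(c)=\pi^{-1}(f^{-1}(c))$ is of exactly one of two types: either $V\not\subseteq E$, where $E$ is the exceptional set of $\pi$, in which case $V$ is the strict transform of a unique component $F_i$ of $f^{-1}(c)$ and $\pi_*[V]=[F_i]$ because $\pi|_V$ is bimeromorphic; or $V\subseteq E$, in which case $V$ is a hypersurface $E_k\subseteq E$ with $\pi_*[E_k]=0$. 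The vanishing is where normality enters decisively: since $X$ is normal, $\mathrm{Sing}(X)$ has codimension at least $2$, so $\pi(E)\subseteq\mathrm{Sing}(X)$ has dimension at most $n-2$ and the integration current over the $(n-1)$-dimensional $E_k$ pushes forward to zero; the same codimension bound ensures no component $F_i$ of the $(n-1)$-dimensional fibre lies inside $\mathrm{Sing}(X)$, so the strict transforms are well defined and of degree one. Now suppose $\sum_i a_i[V_i]+\sum_k b_k[E_k]$ with $a_i,b_k\ge 0$ is the $(1,1)$-component of a boundary on $\widetilde X$. Pushing forward by $\pi$, which carries $(1,1)$-components of boundaries to $(1,1)$-components of boundaries by the identity $\pi_*(\partial\overline Q+\overline\partial Q)=\partial\overline{\pi_*Q}+\overline\partial\pi_*Q$, shows that $\sum_i a_i[F_i]$ is the $(1,1)$-component of a boundary on $X$; since $f$ is essential this forces all $a_i=0$. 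The remaining current $\sum_k b_k[E_k]$ is then a nonnegative combination of hypersurfaces contained in $E$ that is the $(1,1)$-component of a boundary on $\widetilde X$, so the exceptional-set hypothesis forces all $b_k=0$, proving the claim.

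Granted that $\tilde f$ is essential with balanced generic fibre, I would finish by invoking the smooth case, namely \cite{22}, Theorem 5.5 applied to $(\widetilde X,\tilde f)$: this yields that $\widetilde X$ is balanced, whence $X\in\mathscr B$. (Concretely the smooth case proceeds by slicing a hypothetical nonzero positive $(1,1)$-current $T=\pi_{1,1}dS$ over the balanced generic fibres; Theorem 1.1 kills the generic slices, a structure argument identifies the surviving part of $T$ with a nonnegative combination of components of the special fibres, and essentiality of $\tilde f$ then gives $T=0$.) The main obstacle is precisely the essentiality claim: one must enumerate the fibre components of $\tilde f$ correctly as strict transforms together with exceptional hypersurfaces and, crucially, use the normality of $X$ to render the exceptional components invisible to $\pi_*$, so that the two hypotheses decouple and can be applied one after the other.
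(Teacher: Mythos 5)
Your core argument --- the proof that $\tilde f=f\circ\pi$ is essential --- is exactly the paper's Claim~1: decompose the components of $\tilde f^{-1}(c)$ into strict transforms and exceptional hypersurfaces, push forward by $\pi$, use normality to get $\pi_*[E_k]=0$ and $\pi_*[V]=[F_i]$, apply essentiality of $f$ to kill the strict-transform coefficients, and then apply the hypothesis on the exceptional set to kill the rest. That part matches the paper step for step, including the appeal to \cite{22}, Theorem 5.5 at the end.

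The genuine gap is in your verification of the fibre hypothesis of Theorem 5.5. You assert, via ``generic smoothness,'' that there is a finite $B\subset C$ off which $f^{-1}(c)$ is nonsingular. For the smooth total space $\widetilde X$ this is fine (the critical values of $\tilde f$ form a proper analytic, hence finite, subset of $C$), but for the singular space $X$ it is not justified: $X_s$ may have positive dimension and $f(X_s)$ may be all of $C$, in which case every fibre of $f$ meets $X_s$ and nothing guarantees that any fibre of $f$ is nonsingular. In that situation your argument produces no balanced fibres of $\tilde f$ at all, since your only source of balancedness is the hypothesis on nonsingular fibres of $f$. A secondary issue is that even where your argument works you only establish balancedness of $\tilde f^{-1}(c)$ for $c\notin B$, whereas Theorem 5.5 as stated requires \emph{every} nonsingular fibre of $\tilde f$ to be balanced. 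The paper's Claim~2 avoids both problems by arguing in the opposite direction: it takes an arbitrary nonsingular fibre $\tilde f^{-1}(p)$, shows from its nonsingularity that the corresponding fibre of $f$ must avoid $X_s$ and that $\pi$ restricts to an isomorphism $\tilde f^{-1}(p)\to f^{-1}(p)$, so that $f^{-1}(p)$ is itself a nonsingular fibre of $f$, hence balanced by hypothesis; no appeal to generic smoothness of $f$ or to Theorem 1.2 is needed. You should replace your generic-fibre argument by this direct one (or at least prove that every nonsingular fibre of $\tilde f$, not just the generic one, is balanced).
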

\begin{proof}
Set $\tilde{f}:=f\circ\pi$. For every $p\in C$, set $f^{-1}(p)=\cup_iV_i$, where $V_i$ are all the irreducible components of $f^{-1}(p)$, which have dimension $n-1$. Since $X$ is normal, $codimX_s \geq 2$, where $X_{s}$ is the set of singular points of $X$. So
\begin{displaymath}
\pi^{-1}(V_i)= \widetilde{V_i}\cup\cup_j E_{ij},
\end{displaymath}
where $\widetilde{V_i}=\overline{\pi^{-1}(V_i-X_s)}$ is the strict transform of $V_i$, and $E_{ij}$ are all irreducible components of $\pi^{-1}(V_i)$ contained in the exceptional set of $\pi$. It is possible that some $E_{i j}$ are contained in other $E_{k l}$ or $\widetilde{V_k}$. We denote any $E_{i j}$, which is not properly contained in other $E_{k l}$ or $\widetilde{V_k}$, by $E_{i j'}$  and we denote any $E_{i j}$, which is properly contained in other $E_{k l}$ or $\widetilde{V_k}$, by $E_{i j''}$ (i.e. there exists other $E_{k l}$ or $\widetilde{V_k}$, such that $E_{i j''}\subsetneqq E_{k l}$ or $\widetilde{V_k}$), then
\begin{displaymath}
\tilde{f}^{-1}(p)=\cup_i (\widetilde{V_i}\cup \cup_{j'} E_{i j'})
\end{displaymath}
and $codimE_{i j'}=1$.

We need the following two claims.
\vspace{2mm}

\noindent \textbf{Claim 1.} $\tilde{f}$ is essential.
\begin{proof}
If not, we have
\begin{displaymath}
\Sigma_ia_i[\widetilde{V_i}] + \Sigma_{ij'}b_{ij'}[E_{ij'}] = \pi_{n-1,n-1}dT,
\end{displaymath}
for some current $T$ on $\widetilde{X}$, $a_i$, $b_{ij'}$ $\geq 0$ and at least one of the ${a_i}^,s$, ${b_{ij'}}^,s$ is positive. Since $\pi(E_{ij'}) \subseteq X_s$ has codimension $\geq 2$, $\pi_*[E_{ij'}] = 0$. Obviously, $\pi_*[\widetilde{V_i}] = [V_i]$,  we have
\begin{displaymath}
\Sigma_ia_i[V_i]=\pi_{n-1,n-1}d(\pi_{*}T)
\end{displaymath}
through $\pi_*$. Since $f$ is essential, $a_i=0$ for all $i$. So
\begin{displaymath}
\Sigma_{ij'}b_{ij'}[E_{ij'}] = \pi_{n-1,n-1}dT,
\end{displaymath}
where $b_{ij'}$ $\geq 0$ and at least one of the ${b_{ij'}}^,s$ is positive. It contradicts the assumption on $\widetilde{X}$.
\end{proof}
\vspace{2mm}

\noindent \textbf{Claim 2.} For every $p\in C$, if $\tilde{f}^{-1}(p)$ is nonsingular, then it is balanced.
\begin{proof}
Since $\tilde{f}^{-1}(p)=\cup_i(\widetilde{V_i}\cup\cup_{j'} E_{ij'})$ is nonsingular, we have
\begin{displaymath}
\widetilde{V_i}\cap \widetilde{V_k} = \emptyset, \quad \forall i\neq k;
\end{displaymath}
\begin{displaymath}
\widetilde{V_i}\cap E_{k l'}= \emptyset, \quad \forall i, k, l'.
\end{displaymath}
Obviously, the sets for different points $p\in C$, $\{V_i\}_i$ don't intersect one another. Since for any $i, j$, $E_{ij}$ is contained in some $E_{k l'}$ or $\widetilde{V_k}$, we have $\widetilde{V_i}\cap E_{ij}=\emptyset$.
On the other hand, if $V_i\cap X_s\neq\emptyset$, then the intersection of $\widetilde{V_i}$ and $\cup_jE_{ij}$ is not empty, so for all $i$, $V_i$ don't intersect with $X_s$. Hence, the map
\begin{displaymath}
\pi:\tilde{f}^{-1}(p)=\sqcup_i\widetilde{V_i}\rightarrow f^{-1}(p)=\sqcup_i V_i
\end{displaymath}
is an isomorphism. Since every nonsingular  fiber of $f$ is balanced and $\tilde{f}^{-1}(p)$ is nonsingular, $\tilde{f}^{-1}(p)$ is balanced.
\end{proof}

Now, by the Claim 1 and 2, $\widetilde{X}$ is balanced according to \cite{22}, Theorem 5.5. Hence, $X$ $\in$ $\mathscr{B}$.
\end{proof}

By the above theorem, we have the following corollary immediately.

\begin{corollary} Suppose $X$ is a purely $n$-dimensional compact normal complex space which admits an essential holomorphic map $f:X\rightarrow C$ onto a nonsingular compact complex curve $C$, and $X$ has a desingularization  $\widetilde{X}$ whose exceptional set has codimension $\geq 2$.  If every nonsingular  fiber of $f$ is a balanced manifold, then $X\in \mathscr{B}$.
\end{corollary}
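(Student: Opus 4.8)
The plan is to deduce this corollary directly from Theorem 3.2 by verifying that the codimension hypothesis on the exceptional set forces the more technical hypothesis of that theorem to hold vacuously. The two statements share every hypothesis except one: Theorem 3.2 requires that no nonzero nonnegative linear combination of hypersurfaces contained in the exceptional set of $\pi$ be the $(1,1)$-component of a boundary on $\widetilde{X}$, whereas here we assume instead that the exceptional set of the desingularization has codimension $\geq 2$. So the whole task is to show that the latter assumption makes the former condition automatic.

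First I would observe that if the exceptional set $E$ of $\pi:\widetilde{X}\to X$ has codimension $\geq 2$ in $\widetilde{X}$, then $E$ contains no hypersurfaces at all. A hypersurface is a pure codimension-one analytic subset, so any hypersurface $H\subseteq E$ would satisfy $\dim H = n-1$ while $\dim E \leq n-2$; this contradicts $H\subseteq E$, since an analytic subset of $E$ has dimension at most $\dim E$. Hence there are simply no hypersurfaces contained in the exceptional set.

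Consequently, the collection of hypersurfaces over which Theorem 3.2 quantifies is empty, and the condition ``no nonzero nonnegative linear combination of such hypersurfaces can be the $(1,1)$-component of a boundary'' is satisfied vacuously. Since all the remaining hypotheses---that $X$ is purely $n$-dimensional, compact and normal, that $f$ is an essential holomorphic map onto a nonsingular compact complex curve, and that every nonsingular fiber of $f$ is balanced---coincide verbatim with those of Theorem 3.2, that theorem applies and yields $X\in\mathscr{B}$.

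I do not expect any genuine obstacle here: the entire content is the elementary dimension count showing that a set of codimension $\geq 2$ can contain no codimension-one subvariety, which is precisely why the statement follows ``immediately'' from the theorem. The only point worth stating with care is the convention that the exceptional set is the locus where $\pi$ fails to be an isomorphism and that ``hypersurface'' means a pure codimension-one subvariety, so that the vacuity argument is unambiguous.
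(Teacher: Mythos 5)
Your proposal is correct and matches the paper's intended argument: the paper derives this corollary ``immediately'' from Theorem 3.2, precisely because an exceptional set of codimension $\geq 2$ contains no hypersurfaces, so the linear-combination hypothesis of that theorem holds vacuously.
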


\begin{theorem}
Let $X$ be a normal compact complex surface which admits a topologically essential holomorphic map onto a nonsingular compact complex curve. If $X$ has a resolution of singularities $\pi:\widetilde{X}\to X$, such that no nonzero nonnegative linear combination of hypersurfaces contained in the exceptional set of $\pi$, is zero in $H_2(\widetilde{X}, \mathbb{R})$, then $X$ is in the Fujiki class $\mathscr{C}$.
\begin{proof}
 Suppose $f:X\rightarrow C$ is the topologically essential holomorphic map onto a nonsingular compact complex curve $C$. Set $\tilde{f}:=f\circ\pi$.

We claim that $\tilde{f}$ is topologically essential. If not, there exists a point $p\in C$,
\begin{displaymath}
f^{-1}(p)=\cup_iV_i,
\end{displaymath}
\begin{displaymath}
\tilde{f}^{-1}(p)=\cup_i(\widetilde{V_i}\cup\cup_{j'} E_{ij'}),
\end{displaymath}
where $V_i$, $\widetilde{V_i}$, $E_{ij'}$, and $X_s$ are defined as in the proof of Theorem 3.2, and $a_i$, $b_{ij'}$ $\geq 0$ and at least one of the ${a_i}^,s$, ${b_{ij'}}^,s$ is positive, such that
\begin{displaymath}
\Sigma_ia_i[\widetilde{V_i}]+ \Sigma_{ij'}b_{ij'}[E_{ij'}]=0
\end{displaymath}
in $H_2(\widetilde{X}, \mathbb{R})$. Since $\pi(E_{ij'})\subseteq X_s$ has codimension $\geq 2$, $\pi_*[E_{ij'}]=0$.
Obviously, $\pi_*[\widetilde{V_i}] = [V_i]$,  we have
\begin{displaymath}
\Sigma_ia_i[V_i] = 0
\end{displaymath}
through $\pi_*$ in $H_2(X, \mathbb{R})$. Since $f$ is topologically essential, $a_i=0$ for all $i$. So
\begin{displaymath}
\Sigma_{ij'}b_{ij'}[E_{ij'}] = 0
\end{displaymath}
where $b_{ij'}$ $\geq 0$ and at least one of the ${b_{ij'}}^,s$ is positive. It contradicts the assumption on $\widetilde{X}$.

By \cite{22}, Corollary 5.8, $\widetilde{X}$ is K\"ahler, so $X$ is in the Fujiki class $\mathscr{C}$.
\end{proof}
\end{theorem}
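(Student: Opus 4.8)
The plan is to follow the template of Theorem 3.2, replacing the role of Michelsohn's balanced criterion by its surface-theoretic counterpart for the K\"ahler property and replacing ``essential'' by ``topologically essential'' throughout. Writing $\tilde f := f\circ\pi : \widetilde X \to C$, the whole argument reduces to the single claim that $\tilde f$ is again topologically essential. Once this is established, Michelsohn's Corollary 5.8 in \cite{22} applies to the smooth compact surface $\widetilde X$ (here the fibres are curves, so the fibrewise balancedness hypothesis is automatic, and by the remark before Theorem 3.2 essential and topologically essential coincide in dimension $2$) and forces $\widetilde X$ to be K\"ahler. Since $\pi$ is a modification, $X$ is then the holomorphic, hence meromorphic, image of a compact K\"ahler manifold, so $X$ lies in the Fujiki class $\mathscr{C}$ by definition.

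To verify the claim I would argue by contradiction, exactly as in Claim 1 of Theorem 3.2 but at the level of homology rather than currents. Fix $p\in C$ and decompose the fibre as $f^{-1}(p)=\cup_i V_i$ into irreducible components; each $V_i$ has dimension $1$ by the open-mapping and dimension argument recalled before Theorem 3.2. Pulling back by $\pi$ one writes $\pi^{-1}(V_i)=\widetilde{V_i}\cup\cup_j E_{ij}$, where $\widetilde{V_i}$ is the strict transform and the $E_{ij}$ lie in the exceptional set; after discarding those $E_{ij}$ properly contained in another $E_{kl}$ or in some $\widetilde{V_k}$, the genuine irreducible components of $\tilde f^{-1}(p)$ are the $\widetilde{V_i}$ together with the remaining hypersurfaces $E_{ij'}$. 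If $\tilde f$ failed to be topologically essential, there would then be a nonnegative, not-all-zero relation $\sum_i a_i[\widetilde{V_i}] + \sum_{ij'} b_{ij'}[E_{ij'}] = 0$ in $H_2(\widetilde X,\mathbb R)$.

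The key mechanism is the pushforward $\pi_*$ on homology. Because $X$ is normal, its singular locus $X_s$ has codimension $\geq 2$, so each exceptional component maps into $X_s$ and therefore $\pi_*[E_{ij'}]=0$, while $\pi_*[\widetilde{V_i}]=[V_i]$. Applying $\pi_*$ to the relation gives $\sum_i a_i[V_i]=0$ in $H_2(X,\mathbb R)$; since $f$ is topologically essential this forces $a_i=0$ for all $i$. What survives is $\sum_{ij'} b_{ij'}[E_{ij'}]=0$ with some $b_{ij'}>0$, that is, a nonzero nonnegative combination of exceptional hypersurfaces vanishing in $H_2(\widetilde X,\mathbb R)$, contradicting the hypothesis placed on the resolution $\pi$. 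This establishes the claim and completes the argument.

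The step I expect to require the most care is the bookkeeping of the fibre decomposition: one must be sure that after removing the properly-contained $E_{ij''}$ the remaining $\widetilde{V_i}$ and $E_{ij'}$ are exactly the distinct irreducible components of $\tilde f^{-1}(p)$, so that topological essentiality for $\tilde f$ is tested against the correct divisor classes, and one must check that $\pi_*$ behaves as claimed on these classes (in particular that no strict transform is inadvertently absorbed into the exceptional locus). Everything else is a direct transcription of the current-level argument of Theorem 3.2 into homology, the essential new input being Corollary 5.8 of \cite{22} in place of Theorem 5.5.
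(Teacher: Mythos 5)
Your proposal is correct and follows essentially the same route as the paper's own proof: reduce to showing $\tilde f=f\circ\pi$ is topologically essential by pushing a putative vanishing relation forward under $\pi_*$ (using normality of $X$ to kill the exceptional classes and topological essentiality of $f$ to kill the strict-transform coefficients), then invoke Corollary 5.8 of \cite{22} to conclude $\widetilde X$ is K\"ahler and hence $X\in\mathscr{C}$. The only difference is expository: you spell out the bookkeeping of the fibre decomposition and the applicability of Corollary 5.8, which the paper leaves implicit.
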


\begin{corollary}
Let $X$ be a normal compact complex surface which admits a topologically essential holomorphic map onto a nonsingular compact complex curve. If the Betti number $b_3(X)= 0$, then $X$ is in the Fujiki class $\mathscr{C}$.
\begin{proof}
Using Theorem 3.4 and following lemma, we get this corollary immediately.
\end{proof}
\end{corollary}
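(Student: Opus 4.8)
The plan is to deduce the statement from Theorem 3.4: it suffices to produce a resolution of singularities $\pi:\widetilde{X}\to X$ for which no nonzero nonnegative linear combination of the irreducible curves contained in the exceptional set is homologous to zero in $H_2(\widetilde{X},\mathbb{R})$. Since $X$ is a normal surface, its singular locus $\Sigma:=X_s$ is a finite set of points, and for any resolution $\pi$ the exceptional set $E=\pi^{-1}(\Sigma)=\bigcup_i E_i$ is a compact curve whose irreducible components $E_i$ are precisely the hypersurfaces contained in $E$. Thus the whole content is the bridging lemma that $X$ refers to: if $b_3(X)=0$, then the classes $[E_i]$ are linearly independent in $H_2(\widetilde{X},\mathbb{R})$. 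This is in fact stronger than what Theorem 3.4 requires, since it rules out every nonzero combination, not merely the nonnegative ones, and once it is in hand Theorem 3.4 applies verbatim to give $X\in\mathscr{C}$.

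To prove the lemma I would compare the long exact homology sequences (with real coefficients) of the pairs $(\widetilde{X},E)$ and $(X,\Sigma)$. Because $\pi$ restricts to a homeomorphism $\widetilde{X}\setminus E\xrightarrow{\ \cong\ }X\setminus\Sigma$ and collapses each connected exceptional fibre to a point, an excision argument yields isomorphisms $H_k(\widetilde{X},E)\cong H_k(X,\Sigma)$ for all $k$. As $\Sigma$ is finite, $H_3(\Sigma)=H_2(\Sigma)=0$, so the exact sequence of $(X,\Sigma)$ gives $H_3(X,\Sigma)\cong H_3(X)$, whence $H_3(\widetilde{X},E)\cong H_3(X)$. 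Now $H_2(E,\mathbb{R})$ is freely generated by the fundamental classes of the irreducible components, and the natural map $i_*:H_2(E)\to H_2(\widetilde{X})$ sends these to the classes $[E_i]$. By exactness of the sequence of $(\widetilde{X},E)$,
\[
\ker\bigl(i_*:H_2(E)\to H_2(\widetilde{X})\bigr)=\operatorname{im}\bigl(\partial:H_3(\widetilde{X},E)\to H_2(E)\bigr).
\]
Under the hypothesis $b_3(X)=0$ we have $H_3(\widetilde{X},E)\cong H_3(X)=0$, so $i_*$ is injective and the $[E_i]$ are linearly independent, as required.

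The step I expect to be the main obstacle is precisely the excision isomorphism $H_k(\widetilde{X},E)\cong H_k(X,\Sigma)$ together with the identification of $H_2(E)$ with $\bigoplus_i\mathbb{R}[E_i]$: one must check that $E$ and $\Sigma$ are neighbourhood deformation retracts, that each connected component of $E$ is the full preimage of one singular point (so that collapsing it reproduces $X$ topologically), and that the possibly singular exceptional curves $E_i$ still contribute exactly one top class each since their pairwise intersections are zero-dimensional. Once this topological bookkeeping is arranged, the remainder is a formal chase in the two long exact sequences, and the hypothesis $b_3(X)=0$ enters at exactly one point, namely to kill $H_3(\widetilde{X},E)$, which is the sole obstruction to the injectivity of $i_*$.
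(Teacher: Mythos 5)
Your argument is correct and is essentially the paper's own: the paper proves the bridging statement as Lemma 3.6, using the map of long exact sequences in Borel--Moore homology for the complements $\widetilde{X}\setminus E\cong X\setminus\Sigma$ to show that $b_3(X)=0$ forces $H^{BM}_3(\widetilde{X}\setminus E,\mathbb{R})=0$ and hence that $i_*:H_2(E,\mathbb{R})=\bigoplus_i\mathbb{R}[E_i]\to H_2(\widetilde{X},\mathbb{R})$ is injective, which is exactly your conclusion. Your relative-homology-plus-excision formulation is the same computation in equivalent language, since $H_k(\widetilde{X},E)\cong H^{BM}_k(\widetilde{X}\setminus E)$, and your remaining checks (the identification $H_2(E,\mathbb{R})=\bigoplus_i\mathbb{R}[E_i]$ via the zero-dimensionality of pairwise intersections) are carried out in the second half of the paper's proof of that lemma.
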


\begin{lemma}
Let $f: X\rightarrow Y$ be a modification between reduced compact complex spaces of dimension $n$. If $Y$ is normal and the Betti number $b_{2n-1}(Y)= 0$, then there is a exact sequence
\begin{displaymath}
\xymatrix{
0\ar[r] &H_{2n-2}(E, \mathbb{R})\ar[r]^{i_*} &H_{2n-2}(X, \mathbb{R})\ar[r]^{f_*} &H_{2n-2}(Y, \mathbb{R})
}
\end{displaymath}
where $E$ is the exceptional set of $f$, $i: E\rightarrow X$ is the inclusion. Moreover, $H_{2n-2}(E, \mathbb{R})= \oplus_j \mathbb{R}[E_j]$, where $\{E_j\}_j$ are all the $(n-1)$-dimensional irreducible components of $E$.
\end{lemma}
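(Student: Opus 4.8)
The plan is to compare the long exact homology sequences of the pairs $(X,E)$ and $(Y,A)$, where $A:=f(E)$ is the center of the modification, feeding in two external facts: that normality of $Y$ forces $A$ to have codimension $\geq 2$, and that $f$ induces an isomorphism on relative homology because it is a homeomorphism away from $E$.

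First I would fix notation: let $A\subseteq Y$ be the center, so that $E=f^{-1}(A)$ and $f$ restricts to a biholomorphism $X\setminus E\xrightarrow{\sim}Y\setminus A$; since $f$ is surjective we also have $f(E)=A$. Because $Y$ is normal, the center of a modification has codimension $\geq 2$ (the same fact already used in the proof of Theorem 3.2), so $A$ is a compact analytic set of real dimension $\leq 2n-4$, whence $H_k(A,\mathbb{R})=0$ for all $k\geq 2n-3$.

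The key step is the excision-type isomorphism $f_*\colon H_k(X,E)\xrightarrow{\sim}H_k(Y,A)$ for every $k$. To obtain it I would pass to quotients: $f$ descends to a continuous bijection $\bar f\colon X/E\to Y/A$ between compact Hausdorff spaces, hence a homeomorphism, so $\widetilde H_*(X/E)\cong\widetilde H_*(Y/A)$. Since $X$ and $Y$ are triangulable with $E$, $A$ as subcomplexes (\L ojasiewicz), the pairs $(X,E)$, $(Y,A)$ are good, so $H_*(X,E)\cong\widetilde H_*(X/E)$ and likewise over $Y$, and naturality of these identifications with respect to $\bar f$ shows that the resulting isomorphism is exactly $f_*$. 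I expect this to be the main obstacle, in the sense that it is the one place where one must argue carefully rather than quote a formula: everything hinges on the relative homeomorphism together with the good-pair property.

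With these inputs the diagram chase is routine. From the sequence of $(Y,A)$ together with $H_{2n-3}(A)=H_{2n-2}(A)=H_{2n-1}(A)=0$ I get $H_{2n-2}(Y)\cong H_{2n-2}(Y,A)$, and, using the hypothesis $b_{2n-1}(Y)=0$, also $H_{2n-1}(Y,A)=0$. Transporting through the excision isomorphism yields $H_{2n-1}(X,E)=0$ and $H_{2n-2}(X,E)\cong H_{2n-2}(Y)$. The sequence of $(X,E)$ then gives injectivity of $i_*$ (since $H_{2n-1}(X,E)=0$), while the naturality square $j_Y\circ f_*=f_*^{\mathrm{rel}}\circ j_X$, in which $j_Y\colon H_{2n-2}(Y)\to H_{2n-2}(Y,A)$ and $f_*^{\mathrm{rel}}$ are isomorphisms, forces $\ker f_*=\ker j_X=\operatorname{im} i_*$, which is precisely the asserted exactness. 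Finally, for the ``moreover'' I would invoke the standard description of the top homology of a complex analytic set: since $E$ has complex dimension $\leq n-1$, the group $H_{2n-2}(E,\mathbb{R})$ is freely generated over $\mathbb{R}$ by the fundamental classes of the $(n-1)$-dimensional irreducible components of $E$, which are exactly the $[E_j]$.
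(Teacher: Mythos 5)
Your argument is correct, and its skeleton is the same as the paper's: compare the long exact sequences ``of $X$ rel $E$'' and ``of $Y$ rel $A$'' through the isomorphism induced by the biholomorphism away from the exceptional locus, use normality of $Y$ to kill the homology of the center $A$ in degrees $\geq 2n-3$, and use $b_{2n-1}(Y)=0$ to get injectivity of $i_*$. The difference is in the packaging. The paper works with the Borel--Moore exact sequences of the closed/open decompositions $E\subseteq X$, $A\subseteq Y$, so its excision input is simply that $U=X-E\to V=Y-A$ is a homeomorphism, giving $H^{BM}_*(U)\cong H^{BM}_*(V)$ with no further argument; you instead use ordinary relative homology and must justify $H_*(X,E)\cong H_*(Y,A)$ via the quotient homeomorphism $X/E\cong Y/A$ together with triangulability (good pairs) -- a correct but slightly heavier route to the same isomorphism, since $H_k(X,E)\cong H^{BM}_k(U)$ for such pairs. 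The other divergence is the ``moreover'': the paper proves $H_{2n-2}(E,\mathbb{R})=\oplus_j\mathbb{R}[E_j]$ by stripping off the pairwise intersections and lower-dimensional components with another Borel--Moore sequence, whereas you quote it as the standard Borel--Haefliger description of the top homology of a compact analytic set; that fact is indeed standard (and is reference [10] of the paper), so this is a legitimate shortcut, though less self-contained than what the authors wrote.
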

\begin{proof}
Consider the following commutative diagram of exact sequences of Borel-Moore homology
\begin{displaymath}
\xymatrix{
H_{2n-1}(X, \mathbb{R})\ar[d]^{f_*} \ar[r]& H^{BM}_{2n-1}(U, \mathbb{R}) \ar[d]^{\cong} \ar[r]& H_{2n-2}(E, \mathbb{R}) \ar[d]\ar[r]^{i_*}& H_{2n-2}(X, \mathbb{R})\ar[d]^{f_*}\ar[r]&H^{BM}_{2n-2}(U, \mathbb{R})\ar[d]^{\cong}\\
H_{2n-1}(Y, \mathbb{R})       \ar[r]& H^{BM}_{2n-1}(V, \mathbb{R})     \ar[r] & H_{2n-2}(F, \mathbb{R})     \ar[r]& H_{2n-2}(Y, \mathbb{R})         \ar[r]&H^{BM}_{2n-2}(V, \mathbb{R}) }
\end{displaymath}
where $F= f(E)$, $U= X-E$, $V= Y- F$. Since $Y$ is normal, by \cite{19}, page 215, $codim_YF\geq 2$, so $H_{2n-2}(F, \mathbb{R})= 0$. Since $H_{2n-1}(Y, \mathbb{R})=0$, we obtain $H^{BM}_{2n-1}(V, \mathbb{R})=0$ by the second long exact sequence, hence $H^{BM}_{2n-1}(U, \mathbb{R})=0$, so $i_*$ is injective. If $\alpha\in Ker(f_*:H_{2n-2}(X, \mathbb{R})\rightarrow H_{2n-2}(Y, \mathbb{R}))$, i.e. $f_*(\alpha)=0$, then the image of $\alpha$ in $H^{BM}_{2n-2}(U, \mathbb{R})\cong H^{BM}_{2n-2}(V, \mathbb{R})$ is zero, hence $\alpha$ is in the image of $i_*$ by the first long exact sequence. Since $H_{2n-2}(F, \mathbb{R})= 0$, so $f_*i_*=0$, hence we get the short exact sequence in the lemma.

Let $E_1,...,E_r$ be  all the irreducible components of $E$ which have dimension $(n-1)$, $R_1,..., R_s$ all the  irreducible components of $E$ which have dimension smaller than $(n-1)$, $A=\cup_{i\neq j}(E_i\cap E_j)\cup \cup_{k\neq l}(R_k\cap R_l)\cup\cup_{i, k}(E_i\cap R_k)$, $E'_i=E_i-A\cap E_i$ and $R'_k=R_k-A\cap R_k$. So $E'_i$, $R'_k$, $i=1,...,r, k=1,...,s$ don't intersect one another and $E-A=\cup_i E'_i\cup\cup_k R'_k$. Now, we consider the exact sequence of Borel-Moore homology for $(E, A)$
\begin{displaymath}
\xymatrix{
H_{2n-2}(A, \mathbb{R})\ar[r] &H_{2n-2}(E, \mathbb{R})\ar[r] &H^{BM}_{2n-2}(E-A, \mathbb{R})\ar[r] &H_{2n-3}(A, \mathbb{R})
}
\end{displaymath}
By definition of $A$, we know $dimA\leq n-2$, so $H_{2n-2}(A, \mathbb{R})=H_{2n-3}(A, \mathbb{R})=0$. Hence $H_{2n-2}(E, \mathbb{R})= H^{BM}_{2n-2}(E-A, \mathbb{R})= \oplus_i H^{BM}_{2n-2}(E'_i, \mathbb{R})\oplus\oplus_k H^{BM}_{2n-2}(R'_k, \mathbb{R})$. By the similar method, consider the long exact sequences of Borel-Moore homology for $(E_i, A\cap E_i)$, $(R_i, A\cap R_i)$, respectively, and $dimR_k\leq n-2$, we obtain $H^{BM}_{2n-2}(E'_i, \mathbb{R})= H_{2n-2}(E_i, \mathbb{R})= \mathbb{R}[E_i]$ and $H^{BM}_{2n-2}(R'_k, \mathbb{R})= H_{2n-2}(R_k, \mathbb{R})= 0$. Hence, we get $H_{2n-2}(E, \mathbb{R})= \oplus_i \mathbb{R}[E_i]$.
\end{proof}

\section{Balanced spaces}

\quad Recall that a complex manifold $M$ is called \emph{$p$-K\"ahler}, if it admits a closed strictly positive $(p, p)$-form (see appendix).

\begin{definition}
 An $n$-dimensional reduced complex space $X$ is called balanced, if there is a closed strictly positive $(n-1, n-1)$-form $\Omega$ on $X$.
 \end{definition}

Obviously, balanced manifolds and K\"ahler spaces are balanced spaces. When $dimX= 2$, a compact balanced space $X$ is a compact K\"ahler space, hence in the class $\mathscr{B}$.

\begin{proposition}
 If $M$ is a $p$-K\"ahler manifold, then any $(p+1)$-dimensional analytic subspace $X$ of $M$ is a balanced space.
  \end{proposition}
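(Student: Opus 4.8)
The plan is to restrict the given $p$-K\"ahler form to $X$ and verify that the restriction is a closed strictly positive $(n-1,n-1)$-form, where $n=\dim X=p+1$, so that the target bidegree $(n-1,n-1)$ is exactly $(p,p)$. Concretely, let $i:X\hookrightarrow M$ denote the inclusion and let $\Phi$ be a closed strictly positive $(p,p)$-form on $M$. Since $i$ is a holomorphic map of reduced complex spaces, the pullback $\Omega:=i^*\Phi\in A^{p,p}(X)$ is defined, and its bidegree is precisely $(n-1,n-1)$. I would then check the two properties demanded by Definition 4.1. Closedness is immediate, because $i^*$ commutes with $d$, so $d\Omega=i^*(d\Phi)=0$.

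The substantive point is strict positivity, and here I would unwind the extrinsic definition of a strictly positive form on a complex space, exploiting that the ambient $M$ is smooth. Given $x\in X$, choose a holomorphic chart $\psi:W\to V\subseteq\mathbb{C}^N$ of $M$ (with $N=\dim M$) around $x$, and set $U:=X\cap W$. Since $X$ is an analytic subspace of $M$, the composite $i_U:=\psi\circ i|_U:U\to V$ realizes $U$ as an analytic subvariety of the subdomain $V\subseteq\mathbb{C}^N$, hence is an embedding of the required type. Put $\omega_U:=(\psi^{-1})^*(\Phi|_W)$; as $\Phi$ is strictly positive on the manifold $M$ and $\psi^{-1}$ is biholomorphic, $\omega_U$ is a strictly positive $(p,p)$-form on $V$. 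A direct computation gives $i_U^*\omega_U=(i|_U)^*\psi^*(\psi^{-1})^*(\Phi|_W)=(i|_U)^*\Phi=\Omega|_U$. Letting $x$ range over $X$, the covering $\{U\}$, the embeddings $i_U$, and the forms $\omega_U$ together witness the strict positivity of $\Omega$ in the sense recalled before Proposition 4.3. Thus $\Omega$ is a closed strictly positive $(n-1,n-1)$-form on $X$, and $X$ is balanced.

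The hard part is conceptual rather than computational: one must resist restricting $\Phi$ to the lower-dimensional tangent spaces of $X$ and instead keep the full ambient $\mathbb{C}^{N}=\mathbb{C}^{\dim M}$ as the embedding target in the definition of strict positivity. Taken this way, the witnessing forms $\omega_U$ are literally $\Phi$ read off in local coordinates of $M$, so they are strictly positive on an open subset of $\mathbb{C}^{N}$ and no positivity is lost under $X\hookrightarrow M$. In other words, the whole proposition is an exercise in choosing the charts of $M$ as the ambient domains in the extrinsic definition, after which the result falls out by matching definitions; the only facts invoked beyond this are that $i^*$ preserves bidegree and commutes with $d$, and that strict positivity is preserved by the biholomorphisms $\psi^{-1}$.
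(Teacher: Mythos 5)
Your proof is correct and follows exactly the route the paper takes: restrict the closed strictly positive $(p,p)$-form from $M$ to the $(p+1)$-dimensional subspace $X$ and observe that the result is a closed strictly positive $(n-1,n-1)$-form witnessing balancedness. The paper dismisses the verification with ``obviously,'' whereas you supply the details (charts of $M$ serving as the ambient subdomains in the extrinsic definition of strict positivity), but the argument is the same.
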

\begin{proof}
Let $\Omega$ be a closed strictly positive $(p, p)$-form on $M$. Then, obviously, the restriction of $\Omega$ to $X$ is also a closed strictly positive $(p, p)$-form, so $X$ is a balanced space.
\end{proof}

Now we construct a singular compact balanced space, which is not a K\"ahler space (even not in the Fujiki class $\mathscr{C}$).

\begin{example}
[\cite{2}]Suppose
\begin{displaymath}
G=\{A \in GL(n+2, \mathbb{C})\mid A \mathbf{}=
\left(\begin{array}{ccc}
 1 & X & z   \\
 0 & I_n & Y   \\
 0 & 0 & 1
 \end{array} \right),
\forall z\in \mathbb{C}, X, Y^T\in \mathbb{C}^n            \}
\end{displaymath}
and $\Gamma\subseteq G$ consists of matrices with entries in $\mathbb{Z}[i]$. Define $\eta\beta_{2n+1}:= G/\Gamma$.

Since there exists a closed emmbedding $I_3\hookrightarrow \eta\beta_{2n+1}$
\begin{displaymath}
\mathbf{}
\left(\begin{array}{ccc}
 1 & x & z   \\
 0 & 1 & y   \\
 0 & 0 & 1
 \end{array} \right)
 \mapsto
 \mathbf{}
\left(\begin{array}{ccc}
 1 & X & z   \\
 0 & I & Y   \\
 0 & 0 & 1
 \end{array} \right)
 \end{displaymath}
 where $X= (x, 0, ... ,0)$, $Y= (y, 0, ... ,0)^T$, the Iwasawa manifold $I_3$ is a complex submanifold of $\eta\beta_{2n+1}$. Since any closed complex subspace of a compact complex space in the Fujiki class $\mathscr{C}$ is also in $\mathscr{C}$ (\cite{16}, Lemma 4.6 (3)) and the Iwasawa manifold $I_3$ is not in $\mathscr{C}$, so $\eta\beta_{2n+1}$ is not in $\mathscr{C}$.

If $H$ is a singular hypersurface in $\mathbb{CP}^n$, then $\eta\beta_5\times H$ is a singular compact balanced space, but not in the Fujiki class $\mathscr{C}$. Indeed, by \cite{3}, (4.7), $\eta\beta_5\times \mathbb{CP}^n$ is $(n+3)$-K\"ahler, so $\eta\beta_5\times H$ is a singular compact balanced space by Proposition 4.2.
\end{example}

Next we recall  several notations and theorems .

Let $M$ be a compact complex manifold. Define the  \emph{Bott-Chern cohomology group} of degree $(p, q)$
\begin{displaymath}
H^{p,q}_{BC}(M):= \frac{Ker(d: A^{p, q}(M)\rightarrow A^{p+q+1}(M))}{\partial\overline{\partial}A^{p-1, q-1}(M)}.
\end{displaymath}

and the \emph{Aeplli cohomology group} of degree $(p, q)$

\begin{displaymath}
H^{p,q}_{A}(M):= \frac{Ker(\partial\overline{\partial}: A^{p, q}(M)\rightarrow A^{p+1, q+1}(M))}{\partial A^{p-1, q}(M) + \overline{\partial}A^{p, q-1}(M)}.
\end{displaymath}
It is well known that all these groups can also be defined  by means of currents of corresponding degree.

Moreover, for every $(p, q)\in \mathbb{N}^2$, the identity induces a natural map
\begin{displaymath}
i: H^{p,q}_{BC}(M)\rightarrow H^{p,q}_{A}(M).
\end{displaymath}
In general, the map $i$ is neither injective nor surjective. When $M$ is a compact complex manifold satisfying $\partial\overline{\partial}$-lemma, for every $(p, q)\in \mathbb{N}^2$, $i$ is an isomorphism, refer to \cite{12}, Lemma 5.15, Remarks 5.16, 5.21.

\begin{theorem}
[\cite{5}, Theorem 1.5] Let $M$ be a complex manifold of dimension $n$, $E$ a compact analytic subset and $\{E_i\}_{i=1,...,s}$ all the $p$-dimensional irreducible components of $E$. If $T$ is a $\partial\overline{\partial}$-closed positive $(n-p, n-p)$-current on $M$ such that $supp T \subseteq E$, then there exist constants $c_i\geq 0$ such that $T- \Sigma_{i=1}^s c_i [E_i]$ is supported on the union of the irreducible components of $E$ of dimension larger than $p$.
\end{theorem}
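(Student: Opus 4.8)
The plan is to read this as a support/structure theorem for $\partial\overline{\partial}$-closed positive currents and to prove it by analysing $T$ near the generic points of each top-dimensional component $E_i$, then peeling off the corresponding multiples of $[E_i]$. Throughout $T$ has bidimension $(p,p)$, and I would fix a reference hermitian metric on $M$ so as to speak of the trace (mass) measure $\sigma_T$ of the positive current $T$. The first move is to localise: let $A$ be the union of $\mathrm{Sing}(E)$, all pairwise intersections $E_i\cap E_k$ ($i\neq k$), all components of $E$ of dimension $>p$, and all components of dimension $<p$. This is a closed analytic subset, and on the open set $M':=M\setminus A$ the set $E\cap M'$ is a disjoint union of the smooth loci $E_i^\circ:=E_i\cap M'$, each a $p$-dimensional submanifold. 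On $M'$, $T$ is a positive $\partial\overline{\partial}$-closed current supported in the $p$-dimensional submanifold $\bigsqcup_i E_i^\circ$.

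By the structure theorem for order-zero currents supported on a submanifold of complementary dimension (King's theorem), a degree count in adapted coordinates $(z',z'')$ with $E_i^\circ=\{z''=0\}$ shows $T|_{M'}=\sum_i\theta_i\,[E_i^\circ]$ for nonnegative $L^1_{loc}$ densities $\theta_i$; here positivity gives $\theta_i\ge 0$. Feeding this into $\partial\overline{\partial}T=0$ and computing in the same coordinates gives $\partial\overline{\partial}T|_{M'}=(\partial\overline{\partial}\theta_i)\wedge[E_i^\circ]$, so each $\theta_i$ is pluriharmonic, hence (by elliptic regularity) a smooth nonnegative pluriharmonic function on the connected manifold $E_i^\circ$.

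The crucial step is to upgrade $\theta_i$ to a constant. Since $E_i$ is compact and irreducible, it suffices to extend $\theta_i$ to a pluriharmonic function on a desingularisation $\widehat{E_i}$, for then the maximum principle on the compact $\widehat{E_i}$ forces $\theta_i\equiv c_i$ with $c_i\ge 0$. The only obstruction to such an extension is a positive (e.g.\ logarithmic) singularity of $\theta_i$ along $A\cap E_i$, which has complex codimension $\ge 1$ in $E_i$. Here I would use that $T$, and not merely $T|_{M'}$, is $\partial\overline{\partial}$-closed on all of $M$: a genuine singularity of $\theta_i$ along $A\cap E_i$ would contribute to $\partial\overline{\partial}(\theta_i[E_i])$ a nonzero current supported on $A\cap E_i$, contradicting $\partial\overline{\partial}T=0$. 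Making this precise is where one invokes the Skoda--El Mir / Alessandrini--Bassanelli extension theory for $\partial\overline{\partial}$-closed positive currents across analytic subsets, which guarantees that the trivial extension remains $\partial\overline{\partial}$-closed and creates no mass on $A\cap E_i$, allowing $\theta_i$ to extend. I expect this extension/constancy step to be the main obstacle, as it is precisely where compactness of $E$ and the global (rather than local) $\partial\overline{\partial}$-closedness are genuinely used.

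Finally I would subtract. Set $S:=T-\sum_i c_i[E_i]$. Each $[E_i]$ is $d$-closed, hence $\partial\overline{\partial}$-closed, so $S$ is $\partial\overline{\partial}$-closed, and by construction $S$ vanishes near every point of $\bigsqcup_i E_i^\circ$; thus $\mathrm{supp}\,S\subseteq\big(\bigcup_{\dim E_j>p}E_j\big)\cup W$, where $W:=A\setminus\bigcup_{\dim E_j>p}E_j$ has complex dimension $<p$. It remains to show $S$ charges nothing on $W$. Both $T$ and the $[E_i]$ are positive, hence of order zero, so $S$ has order zero and its support equals the support of its coefficient measures; since a $\partial\overline{\partial}$-closed positive current of bidimension $(p,p)$ assigns zero mass to an analytic set of dimension $<p$ (the dimension/support theorem, where $\partial\overline{\partial}$-closedness is essential), one gets $\sigma_T(W)=0$, while $\sigma_{[E_i]}(W)=0$ because $\dim_{\mathbb{C}}(E_i\cap W)<p$. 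Hence $S$ has no mass on $W$, so $W\not\subseteq\mathrm{supp}\,S$, and therefore $\mathrm{supp}\,S\subseteq\bigcup_{\dim E_j>p}E_j$, which is exactly the union of the irreducible components of $E$ of dimension larger than $p$.
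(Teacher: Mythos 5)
The paper does not prove this statement: it is imported verbatim as Theorem 4.4 with the citation [\cite{5}, Theorem 1.5] (Alessandrini--Bassanelli) and used as a black box in Theorem 4.5 and Proposition 4.7, so there is no in-paper argument to compare against. Your outline is in effect a reconstruction of the Alessandrini--Bassanelli proof and follows their strategy (local structure on the regular locus, pluriharmonicity and constancy of the densities, then the dimension theorem for the low-dimensional residue); the final subtraction step and the mass argument on $W$ are fine.

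However, one step is wrong as written. You claim that positivity and order zero alone (``King's theorem, a degree count'') already force $T|_{M'}=\sum_i\theta_i\,[E_i^\circ]$, and you only invoke $\partial\overline{\partial}T=0$ afterwards to get pluriharmonicity of the $\theta_i$. That first claim is false: on $\mathbb{C}^2$ the current $T'=\mu(z_1)\,\delta_{\{z_2=0\}}\,i\,dz_1\wedge d\overline{z_1}$, with $\mu$ a positive measure on the line, is positive, of order zero and of bidimension $(1,1)$, and is supported on the $1$-dimensional submanifold $\{z_2=0\}$, yet it is not of the form $\theta\,[\{z_2=0\}]$ --- it pairs a test form against its $dz_2\wedge d\overline{z_2}$ coefficient rather than its tangential part. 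It is precisely the hypothesis $\partial\overline{\partial}T=0$ that kills these tangential components (note $\partial\overline{\partial}T'\neq 0$), and this is the actual content of the Alessandrini--Bassanelli local structure theorem; so $\partial\overline{\partial}$-closedness must enter already at the structure step, not only at the pluriharmonicity step. Separately, your constancy argument for $\theta_i$ is the right idea but not airtight as phrased: a singularity of $\theta_i$ along $A\cap E_i$ need not visibly contradict $\partial\overline{\partial}T=0$, because $T$ may carry additional mass on $A$ whose $\partial\overline{\partial}$ could a priori cancel that contribution. The clean route is to note that $-\theta_i$ is plurisubharmonic and bounded above by $0$ on $E_i^{reg}\setminus A$, hence extends plurisubharmonically across the analytic set, and then to pull back to a resolution $\widehat{E_i}$ and apply the maximum principle on that compact manifold. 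With these two repairs your outline becomes the standard proof of the cited result.
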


\begin{theorem}
Let $X$ be a compact balanced space. If it has a desingularization $\widetilde{X}$, such that $i: H^{1, 1}_{BC}(\widetilde{X})\rightarrow H^{1,1}_{A}(\widetilde{X})$ is injective, then $X\in \mathscr{B}$.
\end{theorem}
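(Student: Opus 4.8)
The plan is to verify, via Michelsohn's criterion (Theorem 1.1), that the smooth compact manifold $\widetilde{X}$ is balanced; then $X\in\mathscr{B}$ by Definition 2.1 (and the remark that balancedness of one resolution is independent of the choice). So it suffices to show that every positive $(1,1)$-current $T$ on $\widetilde{X}$ which is the $(1,1)$-component of a boundary vanishes. Write $T=\partial\overline{Q}+\overline{\partial}Q$ for a $(1,0)$-current $Q$; then $T$ is real and $\partial\overline{\partial}$-closed, and its Aeppli class $[T]_{A}\in H^{1,1}_{A}(\widetilde{X})$ (computed with currents) is zero, being the sum of a $\partial$-exact and a $\overline{\partial}$-exact term.

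First I would use the balanced hypothesis on $X$ to localize $T$ on the exceptional set. Let $\Omega$ be a closed strictly positive $(n-1,n-1)$-form on $X$, let $\pi:\widetilde{X}\to X$ be the given desingularization, and let $E$ be its exceptional set. Then $\pi^{*}\Omega$ is a closed weakly positive $(n-1,n-1)$-form on $\widetilde{X}$, strictly positive on $\widetilde{X}\setminus E$ (there $\pi$ is an isomorphism onto a subset of $X_{reg}$, and strict positivity of $\Omega$ is preserved). Since $d\pi^{*}\Omega=0$ forces $\partial\pi^{*}\Omega=\overline{\partial}\pi^{*}\Omega=0$ by type, integration by parts gives $T(\pi^{*}\Omega)=(\partial\overline{Q}+\overline{\partial}Q)(\pi^{*}\Omega)=0$. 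Thus $T\wedge\pi^{*}\Omega$ is a positive measure of total mass $0$, hence is the zero measure, and strict positivity of $\pi^{*}\Omega$ off $E$ forces the support of $T$ to lie in $E$. Now $T$ is a $\partial\overline{\partial}$-closed positive $(1,1)$-current supported on the compact analytic set $E$, so by Theorem 4.4 with $p=n-1$, and since $E$ has no irreducible component of dimension $>n-1$, I obtain $T=\Sigma_{i}c_{i}[E_{i}]$ with $c_{i}\geq 0$, where $\{E_{i}\}$ are the $(n-1)$-dimensional components of $E$.

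The point of the previous step is that $T$ is now $d$-closed, so it defines a Bott--Chern class $[T]_{BC}\in H^{1,1}_{BC}(\widetilde{X})$ whose image under $i$ is $[T]_{A}=0$. Because the current and form models of these cohomologies coincide, the injectivity hypothesis on $i$ gives $[T]_{BC}=0$, i.e. $T=\sqrt{-1}\,\partial\overline{\partial}v$ for a real $(0,0)$-current $v$. But $T\geq 0$ says that $v$ is plurisubharmonic on the compact manifold $\widetilde{X}$, hence locally constant by the maximum principle, so $T=\sqrt{-1}\,\partial\overline{\partial}v=0$. Therefore no nonzero such $T$ exists, $\widetilde{X}$ is balanced, and $X\in\mathscr{B}$.

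I expect the main obstacle to be the localization step, namely manufacturing from the merely $\partial\overline{\partial}$-closed current $T$ a genuinely $d$-closed current to which Bott--Chern cohomology applies. This is exactly where the balanced structure of $X$ (through $\pi^{*}\Omega$, yielding that the support of $T$ lies in $E$) and the support theorem (Theorem 4.4, upgrading $T$ to the closed cycle $\Sigma_{i}c_{i}[E_{i}]$) are both indispensable; once $T$ is closed, the injectivity of $i$ together with the maximum principle for plurisubharmonic functions closes the argument cheaply.
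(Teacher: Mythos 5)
Your proof is correct and follows essentially the same route as the paper's: localize the support of $T$ on the exceptional set using $\pi^{*}\Omega$, apply Theorem 4.4 to write $T=\Sigma_i c_i[E_i]$, and then use the injectivity of $i:H^{1,1}_{BC}(\widetilde{X})\to H^{1,1}_{A}(\widetilde{X})$ together with the maximum principle for plurisubharmonic functions to conclude $T=0$. The only differences are expository (you spell out the integration-by-parts and the form/current comparison of the cohomologies more explicitly).
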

\begin{proof}
  Set $dimX= n$. Suppose  $\pi: \widetilde{X}\rightarrow X $ is the desingularization. We need to prove that $\widetilde{X}$ is balanced. By Theorem 1.1, it suffices to prove that if $T$ is a positive $(1, 1)$-current on $\widetilde{X}$ which is the $(1, 1)$-component of a boundary, then $T= 0$.

Let $E\subseteq\widetilde{X}$ be the exceptional set of $\pi$, $\Omega$ the closed strictly positive $(n-1, n-1)$-form on $X$. Since $T$ is the $(1, 1)$-component of a boundary, we have $T(\pi^*\Omega) = 0$. On the other hand, we have
\begin{displaymath}
T(\pi^*\Omega)= \int_{\widetilde{X}} T\wedge \pi^*\Omega
\end{displaymath}
and $\pi^*\Omega$ is strictly positive on $\widetilde{X}-E$, so we obtain $suppT\subseteq E$.

By Theorem 4.4 for $p= n-1$, we obtain
\begin{displaymath}
 T= \sum_i c_i[E_i],
\end{displaymath}
where $c_i\geq0$ and  $E_i$ are the $(n-1)$-dimensional irreducible components of $E$. Hence $T$ is a closed current on $\widetilde{X}$. So $T$ is an element in the class $[T]\in H^{1, 1}_{BC}(\widetilde{X})$. Since $T$ is the $(1, 1)$-component of a boundary, $i([T])=0$. Beacause $i$ is injective, we know $[T]=0$. So, there is a real $0$-current $Q$ on $\widetilde{X}$, such that $T= i\partial\overline{\partial}Q$. Using a positive volume element of $\widetilde{X}$,  $Q$ can be identified with a distribution $\alpha$ with $i\partial\overline{\partial}\alpha\geq 0$, i.e. $\alpha$ is plurisubhamonic. Since $\widetilde{X}$ is compact, $\alpha$ is a constant, by maximum principle, hence $T= 0$.
\end{proof}

\begin{theorem}
 Let $X$ be a normal compact balanced space of dimension $n$ with the Betti number $b_{2n-1}(X)=0$. If $X$ has a desingularization  $\widetilde{X}$ satisfying:
 \begin{equation}\label{cond}
 \begin{aligned}
&\textup{any nonnegative  linear combination of  hypersurfaces contained in the exceptional}\\
 &\textup{set on $\widetilde{X}$ which is the $(1, 1)$-component of a boundary is $d$-exact,}
 \end{aligned}
\end{equation}
then $X\in \mathscr{B}$.
\end{theorem}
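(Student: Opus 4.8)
The plan is to mimic the proof of Theorem 4.5, replacing the injectivity of $i\colon H^{1,1}_{BC}(\widetilde{X})\to H^{1,1}_{A}(\widetilde{X})$ by the combination of the topological hypothesis $b_{2n-1}(X)=0$ and the $d$-exactness condition \eqref{cond}. Write $\pi\colon\widetilde{X}\to X$ for the given desingularization, $E\subseteq\widetilde{X}$ for its exceptional set, and $\Omega$ for the closed strictly positive $(n-1,n-1)$-form on $X$ supplied by the balanced structure. By Theorem 1.1 it suffices to show that every positive $(1,1)$-current $T$ on $\widetilde{X}$ which is the $(1,1)$-component of a boundary vanishes.

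First I would localize the support of $T$ exactly as in Theorem 4.5. Since $T$ is the $(1,1)$-component of a boundary we have $T(\pi^*\Omega)=0$, while $T(\pi^*\Omega)=\int_{\widetilde{X}}T\wedge\pi^*\Omega$ with $\pi^*\Omega$ strictly positive off $E$; positivity of $T$ then forces $\operatorname{supp}T\subseteq E$. Applying Theorem 4.4 with $p=n-1$ produces constants $c_i\ge0$ with $T-\sum_i c_i[E_i]$ supported on the components of $E$ of dimension larger than $n-1$, where the $E_i$ are the $(n-1)$-dimensional irreducible components of $E$. As $E$ is a proper analytic subset of the $n$-dimensional manifold $\widetilde{X}$ it has no component of dimension exceeding $n-1$, so the error term is supported on the empty set and $T=\sum_i c_i[E_i]$ holds exactly. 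In particular $T$ is a closed current, and it is a nonnegative linear combination of hypersurfaces contained in the exceptional set which is the $(1,1)$-component of a boundary.

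Next I invoke hypothesis \eqref{cond}: the current $T$ is therefore $d$-exact, so its class in the degree-$2$ de Rham cohomology of $\widetilde{X}$ (computed by currents) vanishes. Passing through Poincar\'e duality on the compact complex manifold $\widetilde{X}$, this says precisely that $\sum_i c_i[E_i]=0$ in $H_{2n-2}(\widetilde{X},\mathbb{R})$, where $[E_i]$ now denotes the fundamental homology class of the subvariety. Here is where the remaining hypotheses enter: since $X$ is normal and $b_{2n-1}(X)=0$, Lemma 3.6 applied to $\pi$ shows that $i_*\colon H_{2n-2}(E,\mathbb{R})\to H_{2n-2}(\widetilde{X},\mathbb{R})$ is injective and that $H_{2n-2}(E,\mathbb{R})=\bigoplus_i\mathbb{R}[E_i]$; consequently the classes $[E_i]$ are linearly independent in $H_{2n-2}(\widetilde{X},\mathbb{R})$. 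Hence $\sum_i c_i[E_i]=0$ forces every $c_i=0$, so $T=0$, which shows $\widetilde{X}$ is balanced and therefore $X\in\mathscr{B}$.

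The one step demanding care — more a point to state precisely than a genuine obstacle — is the passage between the analytic assertion ``$T$ is $d$-exact'' and the topological assertion ``$\sum_i c_i[E_i]=0$ in $H_{2n-2}(\widetilde{X},\mathbb{R})$'': one must note that the integration current $[E_i]$ represents the Poincar\'e dual of the fundamental class $[E_i]$, so that $d$-exactness of the current is equivalent to vanishing of that homology class, thereby making Lemma 3.6 applicable. Everything else is a direct adaptation of the argument for Theorem 4.5, with \eqref{cond} and Lemma 3.6 jointly playing the role that the injectivity of $i\colon H^{1,1}_{BC}(\widetilde{X})\to H^{1,1}_{A}(\widetilde{X})$ played there.
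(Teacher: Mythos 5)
Your proposal is correct and follows essentially the same route as the paper's own proof: reduce to $T=\sum_i c_i[E_i]$ as in Theorem 4.5, use condition \eqref{cond} to conclude $[T]=0$ in $H_{2n-2}(\widetilde{X},\mathbb{R})$, and then apply Lemma 3.6 (via $b_{2n-1}(X)=0$ and normality) to force all $c_i=0$. The extra care you take in spelling out the Poincar\'e duality identification and the linear independence of the $[E_i]$ is a faithful elaboration of what the paper leaves implicit.
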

\begin{proof}
Suppose $T$ is a positive $(1, 1)$-current on $\widetilde{X}$ that is a $(1, 1)$-component of a boundary. As the proof in Theorem 4.5, we obtain
\begin{displaymath}
 T= \sum_i c_i[E_i]
\end{displaymath}
where $c_i\geq0$, $E_i$ are the $(n-1)$-dimensional irreducible components of $E$. By (\ref{cond}), $T= dQ$ for some current $Q$ on $\widetilde{X}$, so $\sum_i c_i [E_i]= [T]_{\widetilde{X}} = 0$ in $H_{2n-2}(\widetilde{X}, \mathbb{R})$. By Lemma 3.6, we get $c_i= 0$ for all $i$.
\end{proof}

\begin{proposition}
Let $X$ be a compact balanced space. If it has a desingularization  $\widetilde{X}$ whose exceptional set has codimension $\geq 2$
then $X\in \mathscr{B}$.\end{proposition}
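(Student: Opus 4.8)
The plan is to reduce the statement to the balancedness of a single desingularization and then invoke Theorem 4.4, following the pattern of the proof of Theorem 4.5 but exploiting the codimension hypothesis to make the structure theorem trivial. Set $\dim X=n$ and let $\pi:\widetilde{X}\to X$ be the given desingularization, with exceptional set $E$ of codimension $\geq 2$, i.e. $\dim E\leq n-2$. Since membership in $\mathscr{B}$ does not depend on the choice of resolution, it suffices to prove that $\widetilde{X}$ is a balanced manifold. By Theorem 1.1 this amounts to showing that every positive $(1,1)$-current $T$ on $\widetilde{X}$ which is the $(1,1)$-component of a boundary is zero, so I would fix such a $T$.

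Next I would localize the support of $T$ exactly as in the proof of Theorem 4.5. Let $\Omega$ be the closed strictly positive $(n-1,n-1)$-form on $X$ provided by the balanced structure. Because $T$ is the $(1,1)$-component of a boundary and $\pi^*\Omega$ is $d$-closed of bidegree $(n-1,n-1)$, the pairing $T(\pi^*\Omega)$ vanishes; on the other hand $T(\pi^*\Omega)=\int_{\widetilde{X}}T\wedge\pi^*\Omega$ and $\pi^*\Omega$ is strictly positive on $\widetilde{X}-E$, so the positivity of $T$ forces $\mathrm{supp}\,T\subseteq E$. I would also record that $T$ is $\partial\overline{\partial}$-closed, which follows directly from writing $T=\partial\overline{Q}+\overline{\partial}Q$, so that the structure theorem is applicable.

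The final step is where the codimension hypothesis does all the work. Applying Theorem 4.4 with $p=n-1$ to the $\partial\overline{\partial}$-closed positive $(1,1)$-current $T$ supported on $E$, one obtains constants $c_i\geq 0$ such that $T-\sum_i c_i[E_i]$ is supported on the union of the irreducible components of $E$ of dimension larger than $n-1$, where the $E_i$ range over the $(n-1)$-dimensional components of $E$. Since $\dim E\leq n-2$, there are no $(n-1)$-dimensional components, so the sum is empty, and likewise no component of dimension larger than $n-1$ exists; hence $T$ is supported on the empty set and $T=0$. This proves that $\widetilde{X}$ is balanced and therefore $X\in\mathscr{B}$. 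I expect no serious obstacle here: the proposition is essentially a corollary of the machinery assembled for Theorem 4.5, the role of the codimension assumption being precisely to collapse the conclusion of Theorem 4.4 so that neither the Bott--Chern injectivity of Theorem 4.5 nor the Betti-number hypothesis of Theorem 4.6 is needed. The only point requiring genuine care is the support localization, namely verifying that the $d$-closedness of $\pi^*\Omega$ yields $T(\pi^*\Omega)=0$ and that strict positivity off $E$ then pins the support to $E$.
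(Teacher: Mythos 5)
Your proof is correct and follows essentially the same route as the paper: localize $\mathrm{supp}\,T\subseteq E$ by pairing with $\pi^*\Omega$ as in Theorem 4.5, then apply Theorem 4.4 with $p=n-1$, where the codimension hypothesis empties both the sum and the higher-dimensional remainder, forcing $T=0$. Your explicit remarks that $T$ is $\partial\overline{\partial}$-closed and that no components of dimension $\geq n-1$ exist merely spell out what the paper's ``we get $T=0$ immediately'' leaves implicit.
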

\begin{proof}
Suppose $T$ is a positive $(1, 1)$-current on $\widetilde{X}$ which is the $(1, 1)$-component of a boundary, as the proof in Theorem 4.5, we obtain $suppT\subseteq E$. By Theorem 4.4 for $p= n-1$, we get $T= 0$ immediately.
\end{proof}

\begin{proposition}
If $f:X\rightarrow Y$ is a local isomorphism between reduced compact complex spaces and $Y$ is balanced, then $X$ is balanced. Moreover, if $f$ is an unramified covering map, then $X$ is balanced if and only if $Y$ is balanced.
\end{proposition}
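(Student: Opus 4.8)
The plan is to handle the two implications separately; the first is the easy pullback argument already met in Proposition 2.5, and the second requires a trace (pushforward on forms) construction that is available because $f$ is a covering. For the forward implication, note first that a local isomorphism forces $\dim X = \dim Y =: n$. I would take a closed strictly positive $(n-1,n-1)$-form $\Omega$ on $Y$ and set $\Omega_X := f^*\Omega$, using the pullback $f^*:A^{n-1,n-1}(Y)\to A^{n-1,n-1}(X)$ recalled in Section 2. Since $f^*$ commutes with $d$, we get $d\Omega_X = f^*d\Omega = 0$, so $\Omega_X$ is closed. For strict positivity I would argue locally: given $x\in X$, choose neighbourhoods $W\ni x$ and $U\ni f(x)$ with $f|_W:W\to U$ an isomorphism; then $\Omega_X|_W = (f|_W)^*(\Omega|_U)$ is the pullback of a strictly positive form under an isomorphism, hence strictly positive. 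As such $W$ cover $X$, this shows $X$ is balanced. This is verbatim the computation in Proposition 2.5, applied now to $f$ itself rather than to the base change $\tilde f$.

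For the additional statement, assume $f$ is an unramified covering map and that $X$ is balanced, with closed strictly positive $(n-1,n-1)$-form $\Omega_X$. Because $X$ is compact, $f$ is a finite covering, and over any evenly covered open set $U\subseteq Y$ I can write $f^{-1}(U)=\coprod_i W_i$ with each $f|_{W_i}:W_i\to U$ an isomorphism. I would then define the trace of $\Omega_X$ by the local formula
\[
(f_*\Omega_X)|_U := \sum_i \big((f|_{W_i})^{-1}\big)^*\big(\Omega_X|_{W_i}\big),
\]
and check that these expressions are independent of the chosen trivialization, since the sum runs over the entire fibre; they then glue to a global $(n-1,n-1)$-form $f_*\Omega_X$ on $Y$. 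It is closed because $d$ commutes with the isomorphisms $(f|_{W_i})^{-1}$ and $\Omega_X$ is closed. Each summand $\big((f|_{W_i})^{-1}\big)^*(\Omega_X|_{W_i})$ is strictly positive, being the pullback of a strictly positive form under an isomorphism, and a finite sum of strictly positive forms is again strictly positive because the relevant positivity cone is convex. Hence $f_*\Omega_X$ is a closed strictly positive $(n-1,n-1)$-form on $Y$, so $Y$ is balanced.

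The hard part will be the second implication, and within it the claim that the trace $f_*\Omega_X$ is genuinely a well-defined, globally strictly positive form. Two points must be nailed down: that the sheet-sums agree on overlaps (immediate once one reads the formula as a sum over the full fibre), and, more substantially, that strict positivity survives the summation. The latter I would justify by invoking the positivity results collected in the appendix: strict positivity of $(p,p)$-forms is a pointwise condition on the regular locus that is stable under isomorphisms and under addition, i.e.\ the cone of strictly positive forms is convex, so a finite sum of strictly positive forms is strictly positive. The forward implication, by contrast, is routine and needs no new input beyond the pullback argument of Proposition 2.5.
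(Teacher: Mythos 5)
Your proposal is correct and follows essentially the same route as the paper: pull back the balanced form along the local isomorphism for the first implication, and for a covering map define the trace of $\Omega_X$ by summing the inverse-branch pullbacks over each evenly covered open set, checking agreement on overlaps to glue a global closed strictly positive form on $Y$. The only addition is your explicit remark that strict positivity is preserved under finite sums because the positive cone is convex, which the paper leaves implicit.
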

\begin{proof}
Suppose $n= dimX= dimY$ and $\Omega_Y$ is a closed strictly positive $(n-1, n-1)$-form on $Y$. For all $x\in X$, there is an open neighbourhood $U$ of $x$ in $X$, an open neighbourhood $V$ of $f(x)$ in $Y$, such that $f\mid_U:U\rightarrow V$ is an isomorphism.  $(f^*\Omega_Y)|{U}=(f|_{U})^*(\Omega_Y|_{V})$ is obviously strictly positive and closed, so is $f^*\Omega_Y$. Therefore, $X$ is a balanced space.

If $f$ is an unramified covering map, then for every $y\in Y$, there exists an open neighbourhood $V\subseteq Y$ of $y$, and open subsets $U_1$, $...$, $U_d$ in $X$, which do not intersect with each other, such that $f^{-1}(V)= \cup_{i=1}^d {U_i}$ and the restriction $f|_{U_i}:U_i\rightarrow V$ is an isomorphism for $i=1,...,d$.
If $X$ is a balanced space and $\Omega_X$ is a closed strictly positive $(n-1, n-1)$-form  on $X$, we define a closed strictly positive form on $V$
\begin{displaymath}
\Omega_V:=\Sigma_{i=1}^d(f|_{U_i}^{-1})^*(\Omega_X|_{U_i})
\end{displaymath}
If $V$ and $V'$ are two open subsets in Y as above, and $V\cap V'\neq \phi$, we can easily check $\Omega_V= \Omega_{V'}$ on $V\cap V'$. Hence we can construct a global closed strictly positive $(n-1, n-1)$-form $\Omega_Y$ on $Y$, such that $\Omega_Y|_V= \Omega_V$.
\end{proof}

\section{Appendix}

For the reader's convenience, we collect here some terminology and results needed in this paper. We refer to \cite{21} and \cite{23}. The following notation ``strictly positive $(p, p)$-form" is called positive in \cite{16}, transverse positive in \cite{3}, an element of the interior of the cone of weakly positive $(p, p)$-forms in \cite{13} and \cite{18}.

Let $V$ be a complex $n$-dimensional vector space, $I= (i_1, ..., i_p)$ be a multi-index. For $a_{i_1}, ..., a_{i_p}\in \bigwedge^{1, 0}V$, define $a^I:=a_{i_1}\wedge ... \wedge a_{i_p}$. Denote the set of real $(p, p)$-forms by $\bigwedge_\mathbb{R}^{p, p}V$.

\begin{definition}
If ${e_1, ..., e_n}$ is a basis of $\bigwedge^{1, 0}V$, an $(n, n)$-form $v$ is called a strictly positive volume form if $v= c ie_1\wedge\overline{e_1}\wedge ... \wedge ie_n\wedge\overline{e_n}$, with $c> 0$.
\end{definition}

\begin{definition}
An element $\alpha$ of $\bigwedge_\mathbb{R}^{p, p}V$ is called strictly positive if $\alpha\wedge i^{(n-p)^2} a^I\wedge \overline{a^I}$  is a strictly positive volume form for all $a_1, ..., a_{n-p}$ linearly independent in $\bigwedge^{1, 0}V$.
\end{definition}

\end{document}